\newtheorem{thm}{Theorem}[section]
\newtheorem{prop}[thm]{Proposition}
\newtheorem{cor}[thm]{Corollary}
\newtheorem{lem}[thm]{Lemma}
\theoremstyle{definition}
\newtheorem{defi}[thm]{Definition}
\newtheorem{rem}[thm]{Remark}
\newtheorem{notation}[thm]{Notation}
\newtheorem{exm}[thm]{Example}
\theoremstyle{plain}
\newcommand{\lla}{\langle\!\langle}
\newcommand{\rra}{\rangle\!\rangle}
\newcommand{\trd}{\mathop{\mathrm{Trd}}}
\renewcommand{\phi}{\varphi}
\newcommand{\alt}{\mathop{\mathrm{Alt}}}
\newcommand{\sym}{\mathop{\mathrm{Sym}}}
\newcommand{\ad}{\mathop{\mathrm{Ad}}}
\newcommand{\End}{\mathop{\mathrm{End}}}
\newcommand{\car}{\mathop{\mathrm{char}}}
\newcommand{\id}{\mathop{\mathrm{id}}}
\newcommand{\disc}{\mathop{\mathrm{disc}}}
\author{A.-H. Nokhodkar}
\date{}
\title
{
Symmetric square-central elements in products of orthogonal involutions
in characteristic two
}
\begin{document}
\maketitle

\begin{abstract}
We obtain some criteria for a symmetric square-central element of a totally decomposable algebra with orthogonal involution in characteristic two, to be contained in an invariant quaternion subalgebra.\\
\noindent
\emph{Mathematics Subject Classification:} 16W10, 16K20, 11E39. \\
\end{abstract}

\section{Introduction}
A classical question concerning central simple algebras is to identify under which conditions a square-central element lies in a quaternion subalgebra.
This question is only solved for certain special cases in the literature.
Let $A$ be a central simple algebra of exponent two over a field $F$.
In \cite[(3.2)]{barry2014} it was shown that if $\car F\neq2$ and $F$ is of cohomological dimension less than or equal $2$, then every square-central element of $A$ is contained in a quaternion subalgebra (see also \cite[(4.2)]{barry-chapman}).
In \cite[(4.1)]{barry}, it was shown that if $\car F\neq2$, then an element $x\in A$ with $x^2=\lambda^2\in F^{\times2}$ lies in a (split) quaternion subalgebra if and only if $\dim_F(x-\lambda)A=\frac{1}{2}\dim_F A$.
This result was generalized in \cite[(3.2)]{me2} to arbitrary characteristics, including $\lambda=0$.
On the other hand, in \cite{rowen78} it was shown that there is an indecomposable algebra of degree $8$ and exponent $2$, containing a square-central element (see \cite[(5.6.10)]{jacob}).
Using similar methods, it was shown in \cite{barry} that for $n\geqslant3$ there exists a tensor product of $n$ quaternion algebras containing a square-central element which does not lie in any quaternion subalgebra.

A similar question for a central simple algebra with involution $(A,\sigma)$ over $F$ is whether a symmetric or skew-symmetric square-central element of $A$ lies in a $\sigma$-invariant quaternion subalgebra.
In the case where $\car F\neq2$, $\deg_FA=8$, $\sigma$ has trivial discriminant, the index of $A$ is
not $2$ and one of the components of the Clifford algebra $C(A,\sigma)$ splits, it was shown in \cite[(3.14)]{tignol} that every skew-symmetric square-central element of $A$ lies in a $\sigma$-invariant quaternion subalgebra.
In \cite{me1}, some criteria were obtained for symmetric and skew-symmetric elements whose squares lie in $F^2$, to be contained in a $\sigma$-invariant quaternion subalgebra.
Also, a sufficient condition was obtained in \cite[(6.3)]{mn1} for symmetric square-central elements in a totally decomposable algebra with orthogonal involution in characteristic two, to be contained in a stable quaternion subalgebra.

In this work we study the aforementioned question for totally decomposable algebras with orthogonal involution in characteristic two.
Let $(A,\sigma)$ be a totally decomposable algebra with orthogonal involution over a field $F$ of characteristic two and let $x\in A\setminus F$ be a symmetric element with $\alpha:=x^2\in F$.
Since the case where $\alpha\in F^2$ was settled in \cite{me1}, we assume that $\alpha\in F^\times\setminus F^{\times2}$.
We first study in \S\ref{sec-ins} some properties of {\it inseparable subalgebras}, introduced in \cite{mn1}.
It is shown in Theorem \ref{exm2} that $(A,\sigma)$ has a unique inseparable subalgebra if and only if either $\deg_FA\leqslant4$ or $\sigma$ is anisotropic.
In \S\ref{sec-rep}, the notion of the {\it representation} of an element of $F$ by $\sigma$ is defined.
Theorem \ref{rep} shows that the set of elements in $F$ represented by $\sigma$ coincides with the set of elements in $F$ represented by the {\it Pfister invariant} of $(A,\sigma)$ (a bilinear Pfister form associated to $(A,\sigma)$ and defined in \cite{dolphin3}).
In particular, if $0\neq x\in A$ is symmetric and square-central, then the element $x^2\in F$ is represented by the Pfister invariant (see Corollary \ref{Q}).

We then study our main problem in \S\ref{sec-quat} and \S\ref{sec-exm}.
For the case where $\sigma$ is anisotropic or $A$ has degree $4$, it is shown that every symmetric square-central element of $A$ lies in a $\sigma$-invariant quaternion subalgebra (see Theorem \ref{cor} and Proposition \ref{deg4}).
However, we will see in Proposition \ref{count} that if $\sigma$ is isotropic, $\deg_FA\geqslant8$ and $(A,\sigma)\not\simeq(M_{2^n}(F),t)$, there always exists a symmetric square-central of $(A,\sigma)$ which is not contained in any $\sigma$-invariant quaternion subalgebra of $A$.
If $A$ has degree $8$ or $\sigma$ satisfies certain isotropy condition, it is shown in Proposition \ref{8} and Theorem \ref{main2} that a symmetric square-central element of $A$ lies in a $\sigma$-invariant quaternion subalgebra if and only if it is contained in an inseparable subalgebra of $(A,\sigma)$.
Finally, in Example \ref{exm1} we shall see that this criterion cannot be applied to arbitrary involutions.

\section{Preliminaries}
Throughout this paper, $F$ denote a field of characteristic two.

Let $A$ be a central simple algebra over $F$.
An {\it involution} on $A$ is an anti\-automorphism $\sigma:A\rightarrow A$ of order two.
If $\sigma|_F=\id$, we say that $\sigma$ is of {\it the first kind}.
The sets of {\it alternating} and {\it symmetric} elements of $(A,\sigma)$ are defined as
\[\sym(A,\sigma)=\{x\in A\mid \sigma(x)=x\}\quad {\rm and} \quad \alt(A,\sigma)=\{\sigma(x)-x\mid x\in A\}.\]
For a field extension $K/F$ we use the notation $A_K=A\otimes K$, $\sigma_K=\sigma\otimes \id$ and $(A,\sigma)_K=(A_K,\sigma_K)$.
An extension $K/F$ is called a {\it splitting field}  of $A$ if $A_K$ splits, i.e., $A_K$ is isomorphic to the matrix algebra $M_n(K)$, where $n=\deg_FA$ is the degree of $A$ over $F$.
If $(V,\mathfrak{b})$ is a symmetric bilinear space over $F$, the pair $(\End_F(V),\sigma_\mathfrak{b})$ is denoted by $\ad(\mathfrak{b})$, where $\sigma_\mathfrak{b}$ is the {\it adjoint involution} of $\End_F(V)$ with respect to $\mathfrak{b}$ (see \cite[p. 2]{knus}).
According to \cite[(2.1)]{knus}, if $K$ is a splitting field of $A$, then $(A,\sigma)_K$ is adjoint to a symmetric bilinear form.
We say that $\sigma$ is {\it symplectic}, if this form is alternating.
Otherwise, $\sigma$ is called {\it orthogonal}.
By \cite[(2.6)]{knus}, $\sigma$ is symplectic if and only $1\in\alt(A,\sigma)$.
If $\sigma$ is an orthogonal involution, the {\it discriminant} of $\sigma$ is denoted by $\disc\sigma$ (see \cite[(7.2)]{knus}).

Let $(V,\mathfrak{b})$ be a bilinear space over $F$ and let $\alpha\in F$.
We say that $\mathfrak{b}$ {\it represents} $\alpha$  if $\mathfrak{b}(v,v)=\alpha$ for some nonzero vector $v\in V$.
The set of elements in $F$ represented by $\mathfrak{b}$ is denoted by $D(\mathfrak{b})$.
We also set $Q(\mathfrak{b})=D(\mathfrak{b})\cup \{0\}$.
Observe that $Q(\mathfrak{b})$  is an $F^2$-subspace of $F$.
If $K/F$ is a field extension, the scalar extension of $\mathfrak{b}$ to $K$ is denoted by $\mathfrak{b}_K$.
For $\alpha_1,\cdots,\alpha_n\in F^\times$, the diagonal bilinear form
$\sum_{i=1}^n\alpha_ix_iy_i$ is denoted by $\langle \alpha_1,\cdots,\alpha_n\rangle$.
The form $\lla \alpha_1,\cdots,\alpha_n\rra:=\langle 1,\alpha_1\rangle\otimes\cdots\otimes\langle 1,\alpha_n\rangle$ is called a {\it bilinear ($n$-fold) Pfister form}.
If $\mathfrak{b}$ is a bilinear Pfister form over $F$, then there exists a bilinear form $\mathfrak{b}'$, called the {\it pure subform of $\mathfrak{b}$}, such that $\mathfrak{b}\simeq\langle1\rangle\perp\mathfrak{b}'$.
The form $\mathfrak{b}'$ is uniquely determined, up to isometry (see \cite[p. 906]{arason}).

\section{The inseparable subalgebra}\label{sec-ins}
\begin{defi}
An involution $\sigma$ on a central simple algebra $A$ is called {\it isotropic} if $\sigma(x)x=0$ for some nonzero element $x\in A$.
Otherwise, $\sigma$ is called {\it anisotropic}.
\end{defi}
The following definition was given in \cite[p. 1629]{dolphin2}.
\begin{defi}
A central simple algebra with involution $(A,\sigma)$ is called {\it direct} if $\sigma(x)x\in\alt(A,\sigma)$ for $x\in A$ implies that $x=0$.
\end{defi}
It is clear that every direct algebra with involution is anisotropic.
\begin{notation}
For an algebra with involution $(A,\sigma)$ over $F$ we use the following notation:
\begin{align*}
  \alt(A,\sigma)^+&=\{x\in\alt(A,\sigma)\mid x^2\in F\}, \\
  \sym(A,\sigma)^+&=\{x\in\sym(A,\sigma)\mid x^2\in F\}.
\end{align*}
Note that we have $\alt(A,\sigma)^+\oplus F\subseteq\sym(A,\sigma)^+$.
\end{notation}
\begin{prop}\label{direct}
If $(A,\sigma)$ is a direct algebra with orthogonal involution over $F$, then $\sym(A,\sigma)^+$ is a subfield of $A$.
\end{prop}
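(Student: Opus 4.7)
The plan is to verify four properties of $S:=\sym(A,\sigma)^+$: it contains $F$ and is closed under $F$-scalars, it is closed under addition and multiplication, it is commutative, and every nonzero element has an inverse in $S$. The first property is immediate, the crux is commutativity, and once that is in hand the others are essentially formal (with invertibility following directly from directness).

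For commutativity, I fix $x,y\in S$ with $x^{2}=\alpha$ and $y^{2}=\beta$ in $F$ and set $z=xy+yx$. Anti-multiplicativity of $\sigma$ together with $\sigma(x)=x$ and $\sigma(y)=y$ yields $\sigma(z)=yx+xy=z$, so $z$ is symmetric and in particular $\sigma(z)z=z^{2}$. Expanding $z^{2}$ and using $x^{2},y^{2}\in F$, the cross terms $xy\cdot yx=\beta x^{2}=\alpha\beta$ and $yx\cdot xy=\alpha y^{2}=\alpha\beta$ coincide and therefore cancel in characteristic two, leaving $z^{2}=xyxy+yxyx$. Setting $w=xyxy$ we have $\sigma(w)=yxyx$, so $z^{2}=w+\sigma(w)\in\alt(A,\sigma)$. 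Thus $\sigma(z)z\in\alt(A,\sigma)$, and directness forces $z=0$, i.e.\ $xy=yx$.

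Once $S$ is known to be commutative, the remaining closure properties follow at once: $\sigma(x+y)=x+y$ with $(x+y)^{2}=x^{2}+y^{2}\in F$, and $\sigma(xy)=yx=xy$ with $(xy)^{2}=x^{2}y^{2}\in F$. For invertibility, if a nonzero $x\in S$ satisfied $x^{2}=0$, then $\sigma(x)x=x^{2}=0\in\alt(A,\sigma)$, contradicting directness; hence $x^{2}=\alpha\in F^{\times}$ and $x^{-1}=\alpha^{-1}x\in S$. Combining these steps yields the field structure on $S$.

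The main obstacle is the commutativity step, specifically showing that $z^{2}$ lies in $\alt(A,\sigma)$. Characteristic two enters twice here in an essential way: once to kill the two equal mixed products $xy\cdot yx$ and $yx\cdot xy$ through $2\alpha\beta=0$, and once to identify $xyxy+yxyx$ with $w+\sigma(w)$ and hence as an alternating element. Without that identification the directness hypothesis could not be brought to bear on $z$.
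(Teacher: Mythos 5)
Your argument is correct and matches the paper's proof essentially step for step: you set $z=xy+yx$, show $\sigma(z)z=z^2$ expands to an alternating element after the mixed products cancel in characteristic two, invoke directness to conclude $z=0$, and then deduce closure under sums and products and invertibility of nonzero elements exactly as the paper does. The only cosmetic difference is that you first cancel the two $\alpha\beta$ terms and then identify $xyxy+yxyx$ as $\sigma(w)-w$, whereas the paper writes the whole expansion in one line; the substance is identical.
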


\begin{proof}
Let $x,y\in \sym(A,\sigma)^+$.
Set $\alpha=x^2\in F$ and $\beta=y^2\in F$.
Consider the element $w:=xy+yx\in\sym(A,\sigma)$.
We have
\begin{align*}
\sigma(w)w&=w^2=(xy+yx)^2=xyxy+yxyx+xy^2x+yx^2y\\
&=xyxy+yxyx+\alpha\beta+\alpha\beta
=\sigma(yxyx)-yxyx\in\alt(A,\sigma).
\end{align*}
Hence $w=0$, i.e., $xy=yx$.
It follows that $\sigma(xy)=xy$ and $(xy)^2\in F$, i.e., $xy\in\sym(A,\sigma)^+$.
We also have $(x+y)^2=\alpha+\beta+xy+yx=\alpha+\beta\in F$,  hence $x+y\in\sym(A,\sigma)^+$.
The set  $\sym(A,\sigma)^+$ is therefore a commutative subalgebra of $A$ with $u^2\in F$ for every $u\in\sym(A,\sigma)^+$.
Since $\sigma$ is direct, if $0\neq u\in\sym(A,\sigma)^+$, then $u^2\neq0$, i.e., $u$ is a unit.
Hence, $\sym(A,\sigma)^+$ is a field.
\end{proof}

A central simple algebra with involution is called {\it totally decomposable} if it decomposes into tensor products of quaternion algebras with involution.
Let $(A,\sigma)\simeq\bigotimes_{i=1}^n(Q_i,\sigma_i)$ be a totally decomposable algebra of degree $2^n$ with orthogonal involution over $F$.
By \cite[(2.23)]{knus} the involution $\sigma_i$ is necessarily orthogonal for $i=1,\cdots,n$.
Also, according to \cite[(4.6)]{mn1} there exists a $2^n$-dimensional subalgebra $S\subseteq\sym(A,\sigma)^+$, called an {\it inseparable subalgebra} of $(A,\sigma)$, satisfying (i) $C_A(S)=S$, where $C_A(S)$ is the centralizer of $S$ in $A$; and (ii) $S$ is generated, as an $F$-algebra, by $n$ elements.
Furthermore, for every inseparable subalgebra $S$ of $(A,\sigma)$ we have necessarily $S\subseteq \alt(A,\sigma)^+\oplus F$.
According to \cite[(5.10)]{mn1}, if $S_1$ and $S_2$ are two inseparable subalgebras of $(A,\sigma)$, then $S_1\simeq S_2$ as $F$-algebras.
Note that if $v_i\in\sym(Q_i,\sigma_i)^+\setminus F$ is a unit for $i=1,\cdots,n$, then $F[v_1,\cdots,v_n]$ is an inseparable subalgebra of $(A,\sigma)$.

\begin{thm}\label{main}
Let $(A,\sigma)$ be a totally decomposable algebra with anisotropic orthogonal involution over $F$ and let $S$ be an inseparable subalgebra of $(A,\sigma)$.
Then $S=\alt(A,\sigma)^+\oplus F=\sym(A,\sigma)^+$ is a maximal subfield of $A$.
In particular, the inseparable subalgebra $S$ is uniquely determined.
\end{thm}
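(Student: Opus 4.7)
The plan is to combine Proposition \ref{direct} with the defining centralizer property $C_A(S)=S$ of an inseparable subalgebra recalled just before the statement.

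First I would reduce to the direct case by establishing that an anisotropic orthogonal involution on a totally decomposable algebra in characteristic two is automatically direct. This implication is not formal (directness is a priori strictly stronger than anisotropy), but is available in the literature on totally decomposable orthogonal involutions in characteristic two via an analysis of the Pfister invariant (cf. \cite{dolphin3}, \cite{mn1}, and Theorem \ref{rep}): anisotropy of $\sigma$ corresponds to anisotropy of its Pfister invariant, which controls the possible images of $\sigma(x)x$ in $A/\alt(A,\sigma)$ and therefore upgrades to directness. Granting this, Proposition \ref{direct} gives that $\sym(A,\sigma)^+$ is a commutative subfield of $A$.

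The rest of the argument is a centralizer sandwich. By the properties of the inseparable subalgebra recalled above, we have the chain
\[
S\subseteq \alt(A,\sigma)^+\oplus F\subseteq\sym(A,\sigma)^+,
\]
and $C_A(S)=S$. Since $\sym(A,\sigma)^+$ is a field containing $S$, every element of $\sym(A,\sigma)^+$ commutes with every element of $S$, so $\sym(A,\sigma)^+\subseteq C_A(S)=S$. This forces equality throughout the chain, giving $S=\alt(A,\sigma)^+\oplus F=\sym(A,\sigma)^+$. Finally, since $\dim_F S=2^n=\deg_F A$ and $S$ is commutative, $S$ is a maximal subfield of $A$, and uniqueness of $S$ is immediate from the intrinsic description $S=\sym(A,\sigma)^+$, which does not depend on any decomposition.

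The main obstacle is the first step, the implication ``anisotropic $\Rightarrow$ direct,'' which uses the specific structure of totally decomposable orthogonal involutions in characteristic two and is essential: the natural attempt to show $xs=sx$ for $x\in\sym(A,\sigma)^+$ and $s\in S$ produces $w:=xs+sx\in\sym(A,\sigma)$ with $w^2=xsxs+sxsx\in\alt(A,\sigma)$, and one cannot conclude $w=0$ from anisotropy alone; directness is exactly what bridges this gap. Once that bridge is in place, the remaining argument is a short dimension count inside $\sym(A,\sigma)^+$.
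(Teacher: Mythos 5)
Your proposal is correct and follows essentially the same route as the paper's proof: the paper likewise invokes the ``anisotropic $\Rightarrow$ direct'' implication (citing \cite[(6.1)]{dolphin3}, which is precisely the statement that for a totally decomposable algebra with orthogonal involution, $\sigma$ is anisotropic if and only if no nonzero $x$ satisfies $\sigma(x)x\in\alt(A,\sigma)$, i.e.\ directness), then applies Proposition \ref{direct} to get that $\sym(A,\sigma)^+$ is a field, and concludes $S=\sym(A,\sigma)^+$ by maximal commutativity of $S$ together with the dimension count $\dim_FS=\deg_FA$. The one thing you hedged on, the passage from anisotropy to directness, is in fact a single-citation step; everything else in your argument matches the paper's.
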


\begin{proof}
We already know that $S\subseteq \alt(A,\sigma)^+\oplus F\subseteq\sym(A,\sigma)^+$.
By \cite[(6.1)]{dolphin3}, $(A,\sigma)$ is direct.
Hence, $\sym(A,\sigma)^+$ is a field by Proposition \ref{direct}.
Since $S$ is maximal commutative, we obtain $\sym(A,\sigma)^+=S$.
Finally, as $\dim_F S=\deg_FA$, $S$ is a maximal subfield of $A$.
\end{proof}

The following definition was given in \cite{dolphin3}.
\begin{defi}
Let $(A,\sigma)\simeq\bigotimes_{i=1}^n(Q_i,\sigma_i)$ be a totally decomposable algebra with orthogonal involution over $F$.
The {\it Pfister invariant} of $(A,\sigma)$ is defined as $\mathfrak{Pf}(A,\sigma):=\lla\alpha_1,\cdots,\alpha_n\rra$,
where $\alpha_i\in F^\times$ is a representative of the class $\disc\sigma_i\in F^\times/F^{\times2}$, $i=1,\cdots,n$.
\end{defi}
According to \cite[(7.2)]{dolphin3}, the isometry class of the Pfister invariant is independent of the decomposition of $(A,\sigma)$.
Moreover, if $S$ is an inseparable subalgebra of $(A,\sigma)$, then $S$ may be considered as an underlying vector space of $\mathfrak{Pf}(A,\sigma)$ such that $\mathfrak{Pf}(A,\sigma)(x,x)=x^2$ for $x\in S$ (see \cite[(5.5)]{mn1}).

\begin{notation}
For a positive integer $n$, we denote the bilinear $n$-fold Pfister form $\lla1,\cdots,1\rra$ by $\lla1\rra^n$.
We also set $\lla1\rra^0=\langle1\rangle$.
\end{notation}

Let $\mathfrak{b}$ be a bilinear Pfister form over $F$.
In view of \cite[A.5]{arason}, one can find a non-negative integer $r$ and an anisotropic bilinear Pfister form $\mathfrak{b}'$ such that $\mathfrak{b}\simeq\lla1\rra^r\otimes\mathfrak{b}'$.
As in \cite{me1}, we denote the integer $r$ by $\mathfrak{i}(\mathfrak{b})$.
If $(A,\sigma)$ is a totally decomposable $F$-algebra with orthogonal involution, we simply denote $\mathfrak{i}(\mathfrak{Pf}(A,\sigma))$ by $\mathfrak{i}(A,\sigma)$.
By \cite[(5.7)]{dolphin3}, $(A,\sigma)$ is anisotropic if and only if $\mathfrak{i}(A,\sigma)=0$.
Hence, if $\sigma$ is isotropic,  then the pure subform of $\mathfrak{Pf}(A,\sigma)$ represents $1$.
In other words, if $S$ is an inseparable subalgebra of $(A,\sigma)$, then there exists $x\in S\setminus F$ with $x^2=1$.
Also, if $r=\mathfrak{i}(A,\sigma)>0$, there exists a totally decomposable algebra with anisotropic orthogonal involution $(B,\rho)$ over $F$ such that $(A,\sigma)\simeq(M_{2^r}(F),t)\otimes(B,\rho)$, where $t$ is the transpose involution (see \cite[p. 7]{me1}).
In particular, if $A$ is of degree $2^n$ then $\mathfrak{i}(A,\sigma)=n$ if and only if $(A,\sigma)\simeq(M_{2^n}(F),t)$.

We recall that every quaternion algebra $Q$ over $F$ has a {\it quaternion basis}, i.e., a basis $(1,u,v,w)$ satisfying $u^2+u\in F$, $v^2\in F^\times$ and $w=uv=vu+v$ (see \cite[p. 25]{knus}).
In this case, $Q$ is denoted by $[\alpha,\beta)_F$, where $\alpha=u^2+u\in F$ and $\beta=v^2\in F^\times$.

\begin{lem}\label{bas}
If $(Q,\sigma)$ is a quaternion algebra with orthogonal involution over $F$, then there is a quaternion basis $(1,u,v,w)$ of $Q$ such that $u,v\in\sym(Q,\sigma)$.
\end{lem}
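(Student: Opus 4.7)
The plan is to produce $u, v \in \sym(Q,\sigma)$ with $u^2 + u \in F$ and $v^2 \in F^\times$ satisfying $vu = uv + v$; once these are in hand, $w := uv$ automatically completes a quaternion basis. I would construct $u$ first and then use it to produce $v$.

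To find $u$, let $\gamma$ denote the canonical involution of $Q$, which is symplectic. Since $\trd(x) = x + \gamma(x)$ for every $x \in Q$, we have $\sym(Q,\gamma) = \ker(\trd|_Q)$, a $3$-dimensional $F$-subspace. Since $\sigma$ is orthogonal, $\sigma \neq \gamma$, and a short Skolem--Noether argument (using that $\sym(Q,\gamma)$ generates $Q$ as an $F$-algebra, so any inner automorphism fixing it pointwise must be trivial) shows $\sym(Q,\sigma) \neq \sym(Q,\gamma)$. As both are $3$-dimensional in the $4$-dimensional $Q$, neither can be contained in the other, so some $x \in \sym(Q,\sigma)$ has $\trd(x) \neq 0$. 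Setting $u := \trd(x)^{-1}x$ yields $u \in \sym(Q,\sigma) \setminus F$ with $u^2 + u = \nrd(u) \in F$ via the reduced characteristic polynomial.

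For $v$, Skolem--Noether produces some $v_0 \in Q^\times$ with $v_0 u v_0^{-1} = u+1$, since $u \mapsto u+1$ is a nontrivial $F$-algebra automorphism of $F[u]$. The subspace $T := \{y \in Q : yu = (u+1)y\}$ coincides with $v_0 \cdot F[u]$ and has $F$-dimension $2$. Since $\dim_F T + \dim_F \sym(Q,\sigma) = 2 + 3 = 5 > 4 = \dim_F Q$, the intersection $T \cap \sym(Q,\sigma)$ contains a nonzero element, which I take to be $v$. Then $v^2$ centralizes $u$, so $v^2 \in F[u]$, and conjugation by $v$ acts on $F[u]$ as its nontrivial automorphism and fixes $v^2$, so $v^2 \in F$. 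A short verification (splitting cases according to whether $F[u]$ is a field or $F \times F$) shows $v$ may be chosen with $v^2 \neq 0$, hence $v^2 \in F^\times$. Setting $w := uv$ then gives the required quaternion basis $(1,u,v,w)$ with $u, v \in \sym(Q,\sigma)$.

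The principal obstacle is the first step: establishing $\sym(Q,\sigma) \not\subseteq \ker(\trd)$. This is precisely where the orthogonality hypothesis on $\sigma$ enters in an essential way, via the identification of $\ker(\trd)$ with $\sym(Q,\gamma)$ for the symplectic canonical involution $\gamma$. Everything after this is routine Skolem--Noether combined with a dimension count.
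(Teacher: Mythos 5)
Your proof is correct but follows a genuinely different route from the paper's. The paper starts from $v$: since $\sigma$ is orthogonal, $\alt(Q,\sigma)$ is a $1$-dimensional space spanned by a unit $v$ with $v^2\in F^\times$ and $\trd(v)=0$; one extends $v$ to a quaternion basis $(1,u,v,w)$ and then invokes \cite[(4.5)]{me1} to conclude $\sigma(u)=u$. You instead start from $u$: you compare $\sym(Q,\sigma)$ with $\sym(Q,\gamma)=\ker(\trd)$ to produce a symmetric $u$ of reduced trace $1$, then obtain a symmetric $v$ intertwining $u$ and $u+1$ via a dimension count. Your version is essentially self-contained and makes explicit where orthogonality enters (in $\sym(Q,\sigma)\neq\sym(Q,\gamma)$), whereas the paper's version is shorter but leans on an external citation. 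A nice by-product: since $\trd(vu)=\trd((u+1)v)$ forces $\trd(v)=0$, you get $v^2=\nrd(v)\in F$ directly from the reduced characteristic polynomial, without first needing $v$ invertible.

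The one place you do not actually close the argument is the assertion that $v$ may be chosen with $v^2\neq 0$. When $F[u]$ is a field this is immediate, since then $T=v_0F[u]$ has no nonzero zero divisors. When $F[u]\simeq F\times F$ (so $Q$ splits), it genuinely needs a line of proof: write $e,f$ for the two primitive idempotents of $F[u]$; they are $\sigma$-fixed and are the projections onto the two $u$-eigenspaces $V_\lambda,V_{\lambda+1}$, which are mutually $\mathfrak{b}$-orthogonal because $u$ is $\mathfrak{b}$-self-adjoint with distinct eigenvalues, where $\sigma=\ad(\mathfrak{b})$. If a self-adjoint $v\in T$ vanished on $V_\lambda$, then $\mathfrak{b}(vy,x)=\mathfrak{b}(y,vx)=0$ for all $x\in V_\lambda$, $y\in V_{\lambda+1}$, and non-degeneracy of $\mathfrak{b}|_{V_\lambda}$ would force $v|_{V_{\lambda+1}}=0$ too, so $v=0$. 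Hence a nonzero $v\in T\cap\sym(Q,\sigma)$ is automatically invertible. (Equivalently, one can show $T^\sigma$ is never one of the two lines $v_0Fe$, $v_0Ff$ by a Hilbert-90-style computation, using that $v_0$ is a unit.) You flagged this step but did not carry it out; it is the only real omission, and it is again a place where orthogonality of $\sigma$ is used. With that filled in, the argument is complete.

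One further small point: the Skolem--Noether step uses $F[u]$ possibly isomorphic to $F\times F$, which is not a simple subalgebra, so the vanilla Skolem--Noether theorem does not literally apply. It does hold here because the two idempotents of $F[u]$ have equal rank in $M_2(F)$, but if you prefer to avoid the issue entirely, you can get $\dim_F T=2$ directly: extending scalars to $\bar F$ diagonalizes $u$ with distinct eigenvalues $\lambda,\lambda+1$, and the intertwiner space $\{Y:Yu=(u+1)Y\}$ is then visibly $2$-dimensional, so $\dim_F T=2$ by flatness. No appeal to Skolem--Noether is then needed.
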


\begin{proof}
Let $v\in\alt(Q,\sigma)$ be a unit.
Since $v\notin F$ and  $v^2\in F^\times$, it is easily seen that $v$ extends to a quaternion basis $(1,u,v,w)$ of $Q$.
By \cite[(4.5)]{me1}, we have $\sigma(u)=u$.
\end{proof}

\begin{lem}\label{3}
Let $(A,\sigma)$ be a totally decomposable algebra of degree $8$ with ortho\-gonal involution over $F$.
If $\sigma$ is isotropic, then there are two inseparable subalgebras $S_1$ and $S_2$ of $(A,\sigma)$ with $S_1\neq S_2$.
\end{lem}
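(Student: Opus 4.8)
The plan is to reduce to the shape $(A,\sigma)\simeq(M_2(F),t)\otimes(B,\rho)$ with $\deg_FB=4$, write down the inseparable subalgebra $S_1$ naturally attached to such a decomposition, and then conjugate $S_1$ by a carefully chosen symmetric involution of $(A,\sigma)$ to produce a second inseparable subalgebra $S_2\ne S_1$. The isotropy hypothesis enters only in the initial reduction; the crux is the choice of the conjugating element.

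Since $\sigma$ is isotropic we have $\mathfrak{i}(A,\sigma)\geqslant1$, so by the fact recalled above $(A,\sigma)\simeq(M_{2^r}(F),t)\otimes(B_0,\rho_0)$ with $r\geqslant1$, and splitting off one transpose factor from $M_{2^r}(F)$ yields $(A,\sigma)\simeq(M_2(F),t)\otimes(B,\rho)$, where $(B,\rho)$ is a totally decomposable algebra of degree $4$ with orthogonal involution. Identify $A=M_2(B)$ so that $\sigma\begin{psmallmatrix}a&b\\c&d\end{psmallmatrix}=\begin{psmallmatrix}\rho(a)&\rho(c)\\\rho(b)&\rho(d)\end{psmallmatrix}$, write $(B,\rho)\simeq(Q_2,\sigma_2)\otimes(Q_3,\sigma_3)$, and by Lemma~\ref{bas} fix quaternion bases $(1,u_i,v_i,w_i)$ of $Q_i$ with $u_i,v_i\in\sym(Q_i,\sigma_i)$ for $i=2,3$. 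Then $L:=F[v_2,v_3]$ is a $4$-dimensional inseparable subalgebra of $(B,\rho)$. Putting $v_1:=\begin{psmallmatrix}0&1\\1&0\end{psmallmatrix}$, a symmetric square-central unit with $v_1^2=1$, the algebra $S_1:=F[v_1,v_2,v_3]$ is an inseparable subalgebra of $(A,\sigma)$ (its three generators are symmetric square-central units lying in the three tensor factors), and one checks immediately that $S_1=\bigl\{\begin{psmallmatrix}z&z'\\z'&z\end{psmallmatrix}:z,z'\in L\bigr\}$.

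Now set $p:=u_2u_3\in B$. As $u_2,u_3$ are symmetric, $p\in\sym(B,\rho)$, and the quaternion relation $u_iv_i=v_iu_i+v_i$ gives $[u_2,v_2]=v_2$, hence $c:=[p,v_2]=[u_2,v_2]\,u_3=v_2u_3$, which does \emph{not} belong to $L=F[v_2]\otimes F[v_3]$ (it has a nonzero component along $u_3$ in the $Q_3$-slot). Consider $g:=\begin{psmallmatrix}p&p+1\\p+1&p\end{psmallmatrix}\in M_2(B)$. A one-line computation in $M_2(B)$ shows $\sigma(g)=g$ (since $\rho(p)=p$) and $g^2=1$; in particular $\sigma(g)=g^{-1}$. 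It follows that for every $s\in S_1$ the element $gsg^{-1}$ is again symmetric with $(gsg^{-1})^2=s^2\in F$, and that $gS_1g^{-1}$ is an $8$-dimensional commutative subalgebra equal to its own centralizer and generated by $3$ elements; thus $S_2:=gS_1g^{-1}$ is an inseparable subalgebra of $(A,\sigma)$. Finally, using $g^{-1}=g$, a direct multiplication gives $g\begin{psmallmatrix}v_2&0\\0&v_2\end{psmallmatrix}g^{-1}=\begin{psmallmatrix}c+v_2&c\\c&c+v_2\end{psmallmatrix}$, whose off-diagonal entry $c=v_2u_3$ lies outside $L$; hence this element of $S_2$ does not lie in $S_1$, so $S_1\ne S_2$, as desired.

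The one place that requires care is the choice of $p$: one needs a symmetric element of $B$ that fails to normalise $L$, and the obvious symmetric elements (for instance those of $\sym(Q_2,\sigma_2)\otimes F[v_3]$, or the symmetric element $w_2\otimes v_3+v_2\otimes w_3$) only normalise $L$ modulo $L$, which is not enough to move $S_1$; the choice $p=u_2u_3$ works precisely because its commutator with $v_2$ produces the ``forbidden'' factor $u_3$. Everything else reduces to routine $2\times2$ matrix arithmetic over $B$.
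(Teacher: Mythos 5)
Your proof is correct, and it takes a genuinely different route from the paper's at the key step. The reduction to $(A,\sigma)\simeq(M_2(F),t)\otimes(B,\rho)$ with $\deg_F B=4$ and the choice of $S_1=F[v_1,v_2,v_3]$ are the same as in the paper (up to labelling of the factors), but the construction of $S_2$ diverges. The paper writes down two explicit generators $v_1',v_2'$ built from products of the $u_i,v_i$ spread over the two matrix entries and states that ``computation shows'' $F[v_1',v_2',v_3]$ is another inseparable subalgebra, leaving that verification to the reader. You instead conjugate $S_1$ by the symmetric unit $g=\begin{psmallmatrix}p&p+1\\p+1&p\end{psmallmatrix}$ with $p=u_2u_3$; the identities $\sigma(g)=g$ and $g^2=1$ force $\inn(g)$ to commute with $\sigma$, so $\inn(g)$ is an automorphism of $(A,\sigma)$ and $S_2:=gS_1g^{-1}$ automatically inherits all the defining properties of an inseparable subalgebra ($S_2\subseteq\sym(A,\sigma)^+$, $\dim_FS_2=8$, $C_A(S_2)=S_2$, generated by three elements). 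The only remaining point is $S_2\neq S_1$, which your computation $gv_2g^{-1}=\begin{psmallmatrix}c+v_2&c\\c&c+v_2\end{psmallmatrix}$ with $c=[p,v_2]=v_2u_3\notin L$, set against the description $S_1=\bigl\{\begin{psmallmatrix}z&z'\\z'&z\end{psmallmatrix}:z,z'\in L\bigr\}$, settles cleanly. Your route buys a structural reason why $S_2$ is inseparable (conjugation by an involutory symmetric element preserves everything), whereas the paper's version is marginally shorter to state but rests on an unstated verification.
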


\begin{proof}
Since $\mathfrak{i}(A,\sigma)>0$, we may identify $(A,\sigma)=(B,\rho)\otimes(M_2(F),t)$, where
$(B,\rho)\simeq(Q_1,\sigma_1)\otimes(Q_2,\sigma_2)$ is a tensor product of two quaternion algebras with orthogonal involution.
By Lemma \ref{bas} there exists a quaternion basis $(1,u_i,v_i,w_i)$ of $Q_i$ over $F$ such that $u_i,v_i\in\sym(Q_i,\sigma_i)$, $i=1,2$.
Let $v_3\in\alt(M_2(F),t)$ be a unit.
By scaling we may assume that $v_3^2=1$, because $\disc t$ is trivial (see \cite[p. 82]{knus}).
Then $S_1=F[v_1\otimes1,v_2\otimes1,1\otimes v_3]$ is an inseparable subalgebra of $(A,\sigma)$ (we have identified $Q_1$ and $Q_2$ with subalgebras of $B$).
Let
\begin{align*}
v'_1&=u_2v_1\otimes1+(u_2v_1+v_1)\otimes v_3\in \sym(A,\sigma)^+,\\
v'_2&=u_1v_2\otimes1+(u_1v_2+v_2)\otimes v_3\in \sym(A,\sigma)^+.
\end{align*}
Computation shows that $S_2=F[v'_1,v'_2,1\otimes v_3]$ is another inseparable subalgebra of $(A,\sigma)$.
Clearly, we have $S_1\neq S_2$.
\end{proof}

\begin{thm}\label{exm2}
A totally decomposable algebra with orthogonal involution $(A,\sigma)$ over $F$ has a unique inseparable subalgebra if and only if either $\deg_FA\leqslant 4$ or $\sigma$ is anisotropic.
\end{thm}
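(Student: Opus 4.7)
I would prove both directions, splitting the reverse implication into the anisotropic case (handled by Theorem \ref{main}) and the low-degree case $\deg_F A \leq 4$ (which needs explicit computation), and proving the forward implication by contraposition using Lemma \ref{3} together with a tensor-product bootstrap.

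For the reverse direction, if $\sigma$ is anisotropic then Theorem \ref{main} already identifies the inseparable subalgebra with $\sym(A,\sigma)^+$, so uniqueness is free. If $\deg_F A \leq 4$, the degree-$1$ case is trivial; for $\deg_F A = 2$ I would apply Lemma \ref{bas} to obtain a quaternion basis $(1,u,v,w)$ of $Q = A$ with $u,v \in \sym(Q,\sigma)$ and $v$ a unit, then expand $x^2 \in F$ in this basis to conclude $\sym(Q,\sigma)^+ = F + Fv$, a two-dimensional subalgebra which any inseparable subalgebra must equal by dimension. For $\deg_F A = 4$, decomposing $(A,\sigma) \simeq (Q_1,\sigma_1) \otimes (Q_2,\sigma_2)$ with such bases in each factor gives the candidate $S_0 := F[v_1 \otimes 1,\,1 \otimes v_2]$, and an analogous but longer basis-level computation over the symmetric basis of $A$ (the nine products $b_i \otimes c_j$ with $b_i, c_j \in \{1, u_\cdot, v_\cdot\}$ together with the additional symmetrization arising from $w_1 \otimes w_2$) forces all coefficients of a symmetric square-central element to vanish outside $\{1,\, v_1 \otimes 1,\, 1 \otimes v_2,\, v_1 \otimes v_2\}$, so $\sym(A,\sigma)^+ = S_0$ and uniqueness again follows.

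For the forward direction I argue by contraposition. Assume $\deg_F A = 2^n \geq 8$ and $\sigma$ is isotropic, so $\mathfrak{i}(A,\sigma) \geq 1$. Regrouping a quaternion decomposition of $(A,\sigma)$, I write $(A,\sigma) \simeq (A_0,\sigma_0) \otimes (C,\tau)$, where $(A_0,\sigma_0) \simeq (M_2(F),t) \otimes (Q_1,\sigma_1) \otimes (Q_2,\sigma_2)$ has degree $8$ with isotropic orthogonal involution, and $(C,\tau)$ collects the remaining quaternion factors (taken as $(F,\id)$ when $n = 3$). Lemma \ref{3} then provides two distinct inseparable subalgebras $S_0^1 \neq S_0^2$ of $(A_0,\sigma_0)$, and choosing any inseparable subalgebra $T$ of $(C,\tau)$ — available by the explicit construction cited just before Theorem \ref{main} — I set $S^i := S_0^i \otimes T$. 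Each $S^i$ satisfies the axioms of an inseparable subalgebra: dimensions multiply to $2^n$, the centralizer factors as $C_{A_0}(S_0^i) \otimes C_C(T) = S^i$, the generator count adds to $3 + (n-3) = n$, and $S^i \subseteq \sym(A,\sigma)^+$ because in characteristic two a sum of pairwise commuting elements whose squares lie in $F$ again has its square in $F$. Distinctness $S^1 \neq S^2$ is immediate from $S_0^1 \neq S_0^2$ and the faithfulness of tensor products over $F$.

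The principal obstacle is the $\deg_F A = 4$ computation: when $\sigma$ is isotropic, $\sym(A,\sigma)^+$ is only an $F^2$-subspace of $A$, so the anisotropic trick of Theorem \ref{main} is unavailable and one must push through the explicit basis expansion to rule out any symmetric square-central element outside $S_0$. The remaining pieces — the anisotropic subcase, the degree-$\leq 2$ subcase, and the forward direction — are either immediate or a formal tensor-product bootstrap of Lemma \ref{3}.
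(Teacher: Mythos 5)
Your anisotropic subcase and your forward direction (contraposition via Lemma \ref{3}, then tensoring the two degree-$8$ inseparable subalgebras with an inseparable subalgebra of the remaining factors) coincide with the paper's proof, and your degree-$2$ computation is correct. The genuine gap is in the degree-$4$ case, which is exactly the case where your argument is needed (the anisotropic situation being already covered by Theorem \ref{main}): the equality $\sym(A,\sigma)^+=S_0$ that your basis computation is supposed to produce is \emph{false} when $\sigma$ is isotropic. Take $(A,\sigma)=(M_2(F),t)\otimes(M_2(F),t)\simeq(M_4(F),t)$ and $e=E_{12}+E_{21}\in M_2(F)$, so that $S_0$ is spanned by $I\otimes I$, $e\otimes I$, $I\otimes e$, $e\otimes e$. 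The element
\[
z=\left(\begin{matrix} e & 0 \\ 0 & I\end{matrix}\right)=E_{12}+E_{21}+E_{33}+E_{44}
\]
is symmetric and satisfies $z^2=I$, hence $z\in\sym(A,\sigma)^+$; but $z\notin S_0$, since every element of $S_0$ has equal $(1,1)$- and $(2,2)$-blocks ($aI+be$ for both), whereas the diagonal blocks of $z$ are $e$ and $I$. So no expansion over your basis can force the coefficients outside $\{1,v_1\otimes1,1\otimes v_2,v_1\otimes v_2\}$ to vanish: for isotropic $\sigma$ the set $\sym(A,\sigma)^+$ genuinely exceeds the inseparable subalgebra already in degree $4$.

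What makes the degree-$4$ case work is the sharper containment recorded before Theorem \ref{main}: every inseparable subalgebra satisfies $S\subseteq\alt(A,\sigma)^+\oplus F$, not merely $S\subseteq\sym(A,\sigma)^+$. The paper settles degree $4$ by citing \cite[(4.4)]{me3} for the equality $S=\alt(A,\sigma)^+\oplus F$, whose right-hand side is independent of $S$; note that the element $z$ above is consistent with this, since $z+\lambda I$ has nonzero diagonal for every $\lambda\in F$ and hence $z\notin\alt(M_4(F),t)\oplus F$. If you want to keep your computation self-contained, the set you must show is $4$-dimensional is $\alt(A,\sigma)^+\oplus F$ (alternating square-central elements plus scalars), not $\sym(A,\sigma)^+$. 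Your degree-$2$ argument is unaffected only because there one happens to have $\sym(Q,\sigma)^+=\alt(Q,\sigma)\oplus F$.
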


\begin{proof}
Let $S$ be an inseparable subalgebra of $(A,\sigma)$.
If $A$ is a quaternion algebra, then $S=\alt(A,\sigma)\oplus F$ by dimension count.
If $\deg_FA=4$, then $S=\alt(A,\sigma)^+\oplus F$ by \cite[(4.4)]{me3}.
Also, if $\sigma$ is anisotropic, then $S$ is uniquely determined by Theorem \ref{main}.
This proves the `if' implication.
To prove the converse, let $\deg_FA=2^n$.
 Suppose that $\sigma$ is isotropic and $\deg_FA\geqslant8$, i.e., $n\geqslant3$.
As $\mathfrak{i}(A,\sigma)>0$, we may identify $(A,\sigma)=\bigotimes_{i=1}^{n-1}(Q_i,\sigma_i)\otimes(M_2(F),t)$, where $(Q_i,\sigma_i)$ is a quaternion algebra with orthogonal involution over $F$ for $i=1,\cdots,n-1$.
By Lemma \ref{3} the algebra with involution
\[(Q_{n-2},\sigma_{n-2})\otimes(Q_{n-1},\sigma_{n-1})\otimes(M_2(F),t),\]
 admits two inseparable subalgebras $S_1$ and $S_2$ with $S_1\neq S_2$.
Let $S$ be an inseparable subalgebra of $\bigotimes_{i=1}^{n-3}(Q_i,\sigma_i)$.
Then $S\otimes S_1$ and $S\otimes S_2$ are two inseparable subalgebras of $(A,\sigma)$ with $S\otimes S_1\neq S\otimes S_2$, proving the result.
\end{proof}

\begin{lem}\label{pos}
Let $(A,\sigma)$ be a totally decomposable algebra of degree $2^n$ with orthogonal involution over $F$ and let $x\in\sym(A,\sigma)^+$ be a unit.
If $S$ is an inseparable subalgebra of $(A,\sigma)$, then for every unit $y\in S$, there exists a positive integer $k$ such that $(xy)^k\in \sym(A,\sigma)^+$.
In addition, for such an integer $k$ we have $(xy)^kx=x(xy)^k$.
\end{lem}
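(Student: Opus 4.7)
The plan is to reduce the statement to showing that $t := xy + yx$ is nilpotent. Let $\alpha = x^2$ and $\beta = y^2$, both in $F^\times$ since $x, y$ are units in $\sym(A,\sigma)^+$ (and $y \in S \subseteq \sym(A,\sigma)^+$ forces $y^2 \in F$). Since $\sigma(x)=x$ and $\sigma(y)=y$, we have $\sigma(xy) = yx$. Setting $a:=xy$ and $b:=yx$, a direct calculation gives
\[
(xy)(yx) = xy^2x = \beta x^2 = \alpha\beta, \qquad (yx)(xy) = yx^2y = \alpha\beta,
\]
so $a$ and $b$ commute with $ab=ba=\alpha\beta \in F^\times$. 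A short computation shows that $t = a+b$ commutes with both $x$ and $y$, hence $t \in Z(F[x,y])$, and $\sigma(t)=t$. The quadratic relation $a^2 + ta + \alpha\beta = 0$ over $F[t]$ follows, and since $a, b$ commute and we are in characteristic two, the Freshman's dream gives
\[
a^{2^m} + b^{2^m} = (a+b)^{2^m} = t^{2^m}\qquad\text{for every } m \geq 0.
\]

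Suppose we have found $m$ with $t^{2^m} = 0$, and set $k = 2^m$. Then $(xy)^k = (yx)^k$, so $\sigma((xy)^k) = (yx)^k = (xy)^k$ and, telescoping the middle product $(xy)(yx) = \alpha\beta$, we obtain $((xy)^k)^2 = (xy)^k(yx)^k = (\alpha\beta)^k \in F$. Hence $(xy)^k \in \sym(A,\sigma)^+$. The commutation $(xy)^k x = x(xy)^k$ is then automatic: replace $(xy)^k$ by $(yx)^k$ on one side, and regroup the alternating word $x \cdot y \cdot x \cdots y \cdot x$ of length $2k+1$ to identify $x(yx)^k$ with $(xy)^k x$.

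Everything thus reduces to the main obstacle: proving that $t$ is nilpotent. My approach is to extend scalars to the algebraic closure, where $(A,\sigma)_{\bar F} \simeq (M_{2^n}(\bar F), t_{\mathrm{tr}})$, and write $x = \sqrt\alpha\,I + N$, $y = \sqrt\beta\,I + M$ with $N, M$ symmetric matrices satisfying $N^2 = M^2 = 0$. The key structural fact is that, since $y \in S$ and $S_{\bar F}$ is a local commutative algebra isomorphic to the group algebra $\bar F[(\ff_2)^n]$ (with unique maximal ideal, the augmentation ideal), the element $M = y - \sqrt\beta\,I$ lies in this augmentation ideal. In this form $t = NM + MN$, and the remaining task is to prove that this specific element is nilpotent.

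The hard step is the final nilpotency claim, because in general a product of two symmetric square-zero matrices over $\bar F$ need not be nilpotent; both constraints — that $N$ is symmetric for the bilinear form defining $\sigma$, and that $M$ lies in the commutative subalgebra $S$ — must be used essentially. I would attack this by showing that $a - \sqrt{\alpha\beta}\,I$ is nilpotent over $\bar F$, i.e.\ that $a = xy$ has a single generalized eigenvalue $\sqrt{\alpha\beta}$; the argument uses that the isotropic subspace $\mathrm{im}(N) \subseteq \ker(N)^\perp$ is invariantly controlled by the $S$-action through $M$. Once this is done, $b - \sqrt{\alpha\beta}\,I = y^{-1}(a - \sqrt{\alpha\beta}\,I)y$ is also nilpotent (being a conjugate), and it commutes with $a - \sqrt{\alpha\beta}\,I$ because $a, b$ do. In characteristic two we then have $t = (a - \sqrt{\alpha\beta}\,I) + (b - \sqrt{\alpha\beta}\,I)$, a sum of commuting nilpotents, hence nilpotent, completing the proof.
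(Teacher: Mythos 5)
Your reduction is correct as far as it goes: writing $a=xy$ and $b=yx$, one checks $ab=ba=\alpha\beta$, so $a$ and $b$ commute and the characteristic-two Frobenius gives $a^{2^m}+b^{2^m}=(a+b)^{2^m}=t^{2^m}$; hence if $t:=xy+yx$ is nilpotent of $2$-power index $2^m$, then $(xy)^{2^m}=(yx)^{2^m}$, and the remaining verifications (that $(xy)^k$ is symmetric, that its square is $(\alpha\beta)^k\in F$, and that $(xy)^kx=x(xy)^k$ by re-bracketing the alternating word) are all in order. The genuine gap is that you never prove the nilpotency of $t$. You identify it yourself as ``the hard step,'' pass to $\bar F$ to write $t=NM+MN$ with $N^2=M^2=0$, and then say you ``would attack this by showing that $a-\sqrt{\alpha\beta}\,I$ is nilpotent,'' invoking ``that the isotropic subspace $\mathrm{im}(N)\subseteq\ker(N)^\perp$ is invariantly controlled by the $S$-action through $M$.'' That sentence names a target, not an argument: it does not say which structural property of the inseparable subalgebra $S$ forces the spectrum of $xy$ to collapse to $\sqrt{\alpha\beta}$, nor how. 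And this is not a side issue: since $b=\alpha\beta\,a^{-1}$, the conclusion $a^k=b^k$ already forces $a^{2k}=(\alpha\beta)^k$ and hence $t^{2k}=a^{2k}+(\alpha\beta)^{2k}a^{-2k}=0$, so ``$t$ is nilpotent'' is \emph{equivalent} to the lemma. You have rephrased the statement, not proved it.

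For comparison, the paper circumvents the spectral analysis entirely. It conjugates the inseparable subalgebra by powers of $xy$: the identity $(xy)^{-r}=\alpha^{-r}\beta^{-r}(yx)^r$ shows that each $S_r:=(xy)^rS(xy)^{-r}$ is again contained in $\sym(A,\sigma)$, hence is an inseparable subalgebra of $(A,\sigma)$. As there are only finitely many inseparable subalgebras, $S_r=S_s$ for some $r>s$, so $S_{r-s}=S$; in particular $(xy)^{r-s}y(xy)^{s-r}\in S$ commutes with $y$, and unwinding this relation yields $(xy)^{2(r-s)}=(yx)^{2(r-s)}$. This pigeonhole argument is exactly the mechanism that would fill your gap. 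To salvage your approach you would need either to reproduce such a finiteness/return argument, or to supply an honest proof of the single-eigenvalue claim, and neither appears in your sketch.
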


\begin{proof}
Since $x$ and $y$ are units, the element $(xy)^r$ is a unit for every integer $r$.
For $r\geqslant0$ let $S_r=(xy)^rS(xy)^{-r}$.
Then $S_r$ is a $2^n$-dimensional commutative subalgebra of $A$, which is generated by $n$ elements and satisfies $u^2\in F$ for every $u\in S_r$.
Set $\alpha=x^2\in F^\times$ and $\beta=y^2\in F^\times$.
Then
\[(xy)^{-r}=(y^{-1}x^{-1})^{r}=(\beta^{-1}y\alpha^{-1}x)^r=\alpha^{-r}\beta^{-r}(yx)^{r}.\]
Hence, $S_r=\alpha^{-r}\beta^{-r}(xy)^rS(yx)^r\subseteq\sym(A,\sigma)$, i.e., $S_r$ is an inseparable subalgebra of $(A,\sigma)$.
However, there exists a finite number of inseparable subalgebras of $(A,\sigma)$, so $S_r=S_s$ for some non-negative integers $r,s$ with $r>s$.
It follows that $S_{r-s}=S_0=S$.
In particular, we have $(xy)^{r-s}y(xy)^{s-r}\in S$ and
\begin{equation}\label{eq4}
(xy)^{r-s}y(xy)^{s-r}y=y(xy)^{r-s}y(xy)^{s-r}.
\end{equation}
Set $\lambda=\alpha^{s-r}\beta^{s-r}$, so that $(xy)^{s-r}=\lambda (yx)^{r-s}$.
Replacing in (\ref{eq4}) we obtain
\[\lambda(xy)^{r-s}y(yx)^{r-s}y=\lambda y(xy)^{r-s}y(yx)^{r-s}.\]
It follows that $\lambda y^2(xy)^{2(r-s)}=\lambda y^2(yx)^{2(r-s)}$, because $y^2\in F^\times$.
Hence, $(xy)^{k}=(yx)^{k}$, where $k=2(r-s)$.
We also have $\sigma((xy)^{k})=(yx)^{k}=(xy)^{k}$ and
$((xy)^{k})^2=(xy)^{k}(yx)^{k}\in F^\times$, hence $(xy)^{k}\in\sym(A,\sigma)^+$.
Finally, we have
$(xy)^{k}x=x(yx)^{k}=x(xy)^k$, completing the proof.
\end{proof}

\begin{prop}\label{met}
Let $(A,\sigma)$ be a totally decomposable algebra with orthogonal involution over $F$ and let $x\in\sym(A,\sigma)^+$ with $x^2\notin F^2$.
Then $\sigma$ is isotropic if and only if $\sigma|_{C_A(x)}$ is isotropic.
\end{prop}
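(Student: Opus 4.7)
The ``if'' direction is immediate: any nonzero $y \in C_A(x)$ with $\sigma(y)y = 0$ is already an isotropy witness in $A$. For the converse, assume $\sigma$ is isotropic; note that $x$ is a unit since $x^2 \in F^\times$. I would fix an inseparable subalgebra $S$ of $(A,\sigma)$. By the remark following the definition of $\mathfrak{Pf}(A,\sigma)$, isotropy of $\sigma$ yields some $z \in S \setminus F$ with $z^2 = 1$. If $xz = zx$, then $z+1 \in C_A(x) \cap \sym(A,\sigma)^+$ is a nonzero element (as $z \notin F$) with $(z+1)^2 = z^2+1 = 0$, which already furnishes the required isotropy for $\sigma|_{C_A(x)}$.

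In the remaining case $xz \neq zx$, the plan is to apply Lemma \ref{pos} to the unit $y = z$. The key algebraic identity is $(xz)(zx) = xz^2x = x^2 = \alpha$, so $zx = \alpha(xz)^{-1}$; consequently, the condition ``$(xz)^k \in \sym(A,\sigma)^+$'' provided by the lemma (equivalently $(xz)^k = (zx)^k$) is equivalent to $(xz)^{2k} = \alpha^k$. Let $k_0$ be the smallest positive integer with this property; since $(xz)^2 = \alpha$ would force $zxz = x$ and hence the excluded commutation $xz = zx$, we have $k_0 \geq 2$. Setting $w := (xz)^{k_0} \in C_A(x) \cap \sym(A,\sigma)^+$, one has $w^2 = \alpha^{k_0}$, and the remaining task is to convert $w$ into a nonzero element of $C_A(x) \cap \sym(A,\sigma)^+$ with square zero.

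When $k_0 = 2m$ is even, $w + \alpha^m$ does the job: its square is $\alpha^{2m} + \alpha^{2m} = 0$, and $w = \alpha^m$ is excluded by minimality, because $(xz)^{2m} = \alpha^m$ would make $k = m < k_0$ another valid choice. When $k_0$ is odd, I rescale $w$ to $w' := \alpha^{-(k_0-1)/2}w$, so that $(w')^2 = \alpha = x^2$ and $(w'+x)^2 = 0$ (using that $w$, and hence $w'$, commutes with $x$ by Lemma \ref{pos}). The degenerate possibility $w' = x$, i.e., $(xz)^{k_0} = \alpha^{(k_0-1)/2}x$, is ruled out as follows: right-multiplying by $xz$ gives $(xz)^{k_0+1} = \alpha^{(k_0+1)/2}z$, and squaring this while comparing with the identity $(xz)^{2k_0+2} = \alpha^{k_0}(xz)^2$ forces $(xz)^2 = \alpha$, contradicting $k_0 \geq 2$.

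The main obstacle I foresee is the odd-$k_0$ subcase. The natural substitution ``$w + (\text{scalar in } F)$'' only applies when $\alpha^{k_0}$ is a square in $F$, that is, when $k_0$ is even. For odd $k_0$ one still has $\alpha^{k_0} \in \alpha F^{\times 2}$, and the correct substitute is to match $w$ against a scalar multiple of $x$ rather than against a scalar; excluding the degeneracy then needs the extra manipulation outlined above. In both parities, it is precisely the minimality of $k_0$ which forbids the putative degenerate identity and yields the desired isotropic element of $C_A(x)$.
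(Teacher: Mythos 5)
Your proposal is correct and follows essentially the same route as the paper: take the square-one element of an inseparable subalgebra, apply Lemma \ref{pos}, pass to the minimal exponent, and exhibit $(xz)^{k_0}+x^{k_0}$ (your elements in both parities are scalar multiples of this) as the isotropy witness in $C_A(x)$, with minimality ruling out the degenerate identity. The only differences are cosmetic: you reformulate symmetry of $(xz)^k$ as $(xz)^{2k}=\alpha^k$, treat the commuting case separately, and exclude the odd-case degeneracy by deducing $(xz)^2=\alpha$ rather than the paper's conjugation argument forcing $z=1$.
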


\begin{proof}
Observe first that since $x^2\notin F^2$, $C_A(x)$ is a central simple algebra over $F(x)=F(\sqrt\alpha)$, where $\alpha=x^2\in F^\times$.
If $\sigma|_{C_A(x)}$ is isotropic, then $\sigma$ is clearly isotropic.
To prove the converse, let $S$ be an inseparable subalgebra of $(A,\sigma)$.
Since $\sigma$ is isotopic, there exists $y\in S\setminus F$ with $y^2=1$.
By Lemma \ref{pos}, there is a positive integer $k$ such that $(xy)^k\in \sym(A,\sigma)^+$.
Let $r$ be the minimum positive integer with $(xy)^r\in \sym(A,\sigma)^+$, hence $(xy)^r=(yx)^r$.
We claim that $(xy)^r\neq x^r$.
Suppose that $(xy)^r=x^r$.
If $r$ is odd, write $r=2s+1$ for some non-negative integer $s$.
The equality $(xy)^r=x^r$ then implies that $(yx)^sy(xy)^s=x^{2s}=\alpha^s$.
As $(xy)^s=\alpha^s(yx)^{-s}$, we get $\alpha^{s}(yx)^sy(yx)^{-s}=\alpha^s$.
Hence, $y=1\in F$, which contradicts the assumption.
If $r$ is even, write $r=2s$ for some positive integer $s$, so that $(xy)^r=x^r=\alpha^s$.
Multiplying with $(xy)^{-s}$ we obtain
\begin{align*}
(xy)^s=\alpha^s(xy)^{-s}=\alpha^s\alpha^{-s}(yx)^s=(yx)^s.
\end{align*}
It follows that $(xy)^s\in\sym(A,\sigma)^+$, contradicting the minimality of $r$.
This proves the claim.
According to Lemma \ref{pos}, we have $(xy)^r\in C_A(x)$.
Set $z=(xy)^r+x^r\in C_A(x)$.
Then $z\neq0$ and
$\sigma(z)z=\alpha^{r}+\alpha^{r}=0$, i.e., $\sigma|_{C_A(x)}$ is isotropic.
\end{proof}

\section{The set of elements represented by $\sigma$}\label{sec-rep}

\begin{defi}
Let $(A,\sigma)$ be a totally decomposable algebra with orthogonal involution over $F$ and let $\alpha\in F$.
We say that $\sigma$ {\it represents} $\alpha$ if there exists a nonzero element $x\in A$ such that $\sigma(x)x+\alpha\in\alt(A,\sigma)$.
The set of all elements in $F$ represented by $\sigma$ is denoted by $D(A,\sigma)$.
\end{defi}

The following result is a restatement of \cite[(6.1)]{dolphin3}.
\begin{lem}\label{dir}
Let $(A,\sigma)$ be a totally decomposable algebra with orthogonal invo\-lution over $F$.
Then $\sigma$ is anisotropic if and only if $0\notin D(A,\sigma)$.
\end{lem}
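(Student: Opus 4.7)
The plan is to observe that Lemma \ref{dir} is purely a bookkeeping reformulation of \cite[(6.1)]{dolphin3} via the notion of a direct algebra introduced earlier in the section. Unwinding the definition of $D(A,\sigma)$, the condition $0\in D(A,\sigma)$ means that there exists a nonzero $x\in A$ with $\sigma(x)x+0=\sigma(x)x\in\alt(A,\sigma)$, which is exactly the negation of directness of $(A,\sigma)$. So the claim is equivalent to the assertion that $\sigma$ is anisotropic if and only if $(A,\sigma)$ is direct.

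For the implication $0\notin D(A,\sigma)\Rightarrow \sigma$ anisotropic, I would argue as in the remark following the definition of direct: suppose $(A,\sigma)$ is direct and $\sigma(x)x=0$; then in particular $\sigma(x)x\in\alt(A,\sigma)$, so directness forces $x=0$, showing that $\sigma$ is anisotropic. No use of total decomposability is needed here.

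For the converse $\sigma$ anisotropic $\Rightarrow 0\notin D(A,\sigma)$, I would invoke \cite[(6.1)]{dolphin3} directly, which asserts that a totally decomposable algebra with anisotropic orthogonal involution in characteristic two is direct. This is precisely the non-trivial content the author is packaging, and the signal word "restatement" in the paper confirms that no new argument is being supplied here.

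The main obstacle, were one to attempt a self-contained proof rather than a citation, would be exactly this second implication: producing, from an element $x$ with $\sigma(x)x\in\alt(A,\sigma)\setminus\{0\}$, an isotropic vector for $\sigma$. The natural approach would be induction on the number of quaternion factors in a decomposition $(A,\sigma)\simeq\bigotimes_{i=1}^n (Q_i,\sigma_i)$, combined with the structure of $\sym(A,\sigma)^+$ and $\alt(A,\sigma)^+$ and the analysis of quadratic/bilinear Pfister forms associated to the decomposition; this is precisely the machinery developed in \cite{dolphin3}, and reproducing it here would be out of scope.
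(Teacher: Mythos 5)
Your proposal is correct and matches the paper's approach: the paper simply labels the lemma a restatement of \cite[(6.1)]{dolphin3}, and you have merely spelled out the dictionary between $0\in D(A,\sigma)$ and the failure of directness, supplying the trivial direction (direct $\Rightarrow$ anisotropic) that the paper already notes after the definition of directness and citing \cite[(6.1)]{dolphin3} for the nontrivial converse exactly as the paper does.
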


\begin{lem}\label{an}
Let $\mathfrak{b}$ be an anisotropic bilinear Pfister form over $F$ and let $\alpha\in F$.
If $\alpha\notin D(\mathfrak{b})$, then $\mathfrak{b}_{F(\sqrt{\alpha})}$ is anisotropic.
\end{lem}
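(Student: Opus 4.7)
My plan is to reduce the statement to the roundness of bilinear Pfister forms, which guarantees that $D(\mathfrak{b})$ is a multiplicative subgroup of $F^\times$. First I would dispose of the degenerate cases: if $\alpha=0$ then $F(\sqrt{\alpha})=F$ and there is nothing to prove, while if $\alpha\in F^{\times2}$, say $\alpha=\beta^2$, then choosing $v\in V$ with $\mathfrak{b}(v,v)=1$ (available since $\mathfrak{b}$ is Pfister) yields $\mathfrak{b}(\beta v,\beta v)=\alpha$, so $\alpha\in D(\mathfrak{b})$, contradicting the hypothesis. Thus I may assume $\alpha\in F^\times\setminus F^{\times2}$, whence $K:=F(\sqrt{\alpha})$ is a purely inseparable quadratic extension of $F$.

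The heart of the argument is a short computation over $K$. Writing an arbitrary vector of $V_K=V\otimes_FK$ as $v+w\sqrt{\alpha}$ with $v,w\in V$, and using symmetry of $\mathfrak{b}$ together with $\car F=2$, I would obtain
\[\mathfrak{b}_K(v+w\sqrt{\alpha},\,v+w\sqrt{\alpha})=\mathfrak{b}(v,v)+\alpha\,\mathfrak{b}(w,w),\]
since the cross term $2\sqrt{\alpha}\,\mathfrak{b}(v,w)$ vanishes. Supposing, for a contradiction, that $\mathfrak{b}_K$ is isotropic, I would pick $(v,w)\neq(0,0)$ annihilating the right-hand side. Anisotropy of $\mathfrak{b}$ and the assumption $\alpha\neq0$ then force both $v$ and $w$ to be nonzero: $w=0$ gives $\mathfrak{b}(v,v)=0$, hence $v=0$; and symmetrically $v=0$ forces $w=0$.

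Consequently $\mathfrak{b}(v,v),\mathfrak{b}(w,w)\in D(\mathfrak{b})$ and $\alpha=\mathfrak{b}(v,v)\mathfrak{b}(w,w)^{-1}$. Here I would invoke the roundness of bilinear Pfister forms: for every $c\in D(\mathfrak{b})$ one has $\mathfrak{b}\simeq c\mathfrak{b}$, hence $cD(\mathfrak{b})=D(\mathfrak{b})$; combined with $1\in D(\mathfrak{b})$ this shows that $D(\mathfrak{b})$ is a subgroup of $F^\times$, closed under inverses as well as multiplication. Therefore $\alpha\in D(\mathfrak{b})$, contradicting the hypothesis, so $\mathfrak{b}_K$ is anisotropic. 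I do not anticipate any genuine obstacle: the whole proof rests on the characteristic-two identity displayed above and on the classical roundness of bilinear Pfister forms (see \cite{arason}).
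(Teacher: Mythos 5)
Your proposal is correct. It is worth noting, however, that it takes a somewhat different route than the paper, which disposes of the lemma in two lines of citation: by \cite[(6.1)]{elman} the Pfister form $\mathfrak{b}\otimes\lla\alpha\rra$ is anisotropic, and then by Hoffmann's descent lemma \cite[(4.2)]{hof} the form $\mathfrak{b}_{F(\sqrt{\alpha})}$ is anisotropic. Your argument effectively unfolds both of these black boxes into a self-contained computation: the identity $\mathfrak{b}_K(v+w\sqrt{\alpha},v+w\sqrt{\alpha})=\mathfrak{b}(v,v)+\alpha\,\mathfrak{b}(w,w)$ is precisely what underlies the transfer $\mathfrak{b}_K$ isotropic $\Leftrightarrow$ $\mathfrak{b}\perp\alpha\mathfrak{b}=\mathfrak{b}\otimes\lla\alpha\rra$ isotropic encapsulated in Hoffmann's result, and the roundness of bilinear Pfister forms --- which makes $D(\mathfrak{b})$ a multiplicative group --- is exactly the ingredient one needs to conclude $\alpha\in D(\mathfrak{b})$ from the resulting relation, which is what the cited EKM result supplies. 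The gain of your approach is that it is elementary and does not rely on knowing the statements of those references; the cost is that it rederives standard material. Your handling of the degenerate cases $\alpha=0$ and $\alpha\in F^{\times2}$, and the observation that anisotropy forces both $v$ and $w$ to be nonzero, are all correct.
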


\begin{proof}
Since $\alpha\notin D(\mathfrak{b})$, we have $\alpha\notin F^{\times2}$,
hence $F(\sqrt\alpha)$  is a field.
By \cite[(6.1)]{elman}, $\mathfrak{b}\otimes\lla\alpha\rra$ is anisotropic, hence $\mathfrak{b}_{F(\sqrt{\alpha})}$ is also anisotropic by \cite[(4.2)]{hof}.
\end{proof}

\begin{lem}\label{lam}
Let $(A,\sigma)$ be a totally decomposable algebra with orthogonal invo\-lution over $F$.
If $\sigma$ is anisotropic, then $D(A,\sigma)\subseteq D(\mathfrak{Pf}(A,\sigma))$.
\end{lem}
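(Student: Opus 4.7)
My plan is to argue by contradiction: suppose $\alpha\in D(A,\sigma)\setminus D(\mathfrak{Pf}(A,\sigma))$ and derive, after scalar extension to $K=F(\sqrt\alpha)$, a configuration that contradicts the anisotropy of $\sigma_K$. The two main tools are Lemma \ref{an}, which controls the behaviour of an anisotropic bilinear Pfister form under the quadratic extension $F(\sqrt\alpha)/F$, and the fact (already used in the proof of Theorem \ref{main}) that a totally decomposable algebra with anisotropic orthogonal involution is \emph{direct}.

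I first reduce to the case where $K/F$ is a genuine quadratic extension. Lemma \ref{dir} combined with the anisotropy of $\sigma$ gives $\alpha\neq0$. Since $\mathfrak{Pf}(A,\sigma)\simeq\langle1\rangle\perp\mathfrak{Pf}(A,\sigma)'$ represents every element of $F^{\times2}$, the hypothesis $\alpha\notin D(\mathfrak{Pf}(A,\sigma))$ forces $\alpha\notin F^{\times2}$, so $K=F(\sqrt\alpha)$ is a field and $\lambda:=\sqrt\alpha\notin F$.

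I then transfer all hypotheses to $K$. Since the decomposition $(A,\sigma)\simeq\bigotimes_i(Q_i,\sigma_i)$ base-changes componentwise and the discriminants $\disc\sigma_i$ are preserved under base change, one has $\mathfrak{Pf}((A,\sigma)_K)=\mathfrak{Pf}(A,\sigma)_K$. By the anisotropy criterion \cite[(5.7)]{dolphin3} recalled in \S\ref{sec-ins}, $\mathfrak{Pf}(A,\sigma)$ is anisotropic, so Lemma \ref{an} ensures that $\mathfrak{Pf}(A,\sigma)_K$ is anisotropic. Reapplying the criterion in the other direction shows that $\sigma_K$ is anisotropic, and then \cite[(6.1)]{dolphin3} upgrades $(A,\sigma)_K$ to a direct algebra with involution.

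Finally, fix $x\in A\setminus\{0\}$ with $\sigma(x)x+\alpha\in\alt(A,\sigma)$ and set $y:=x+\lambda\in A_K$. Viewing $A_K=A\oplus A\lambda$ as an $F$-vector space, one sees at once that $y\neq0$. Using $\lambda^2=\alpha$ and the fact that $\sigma(x)+x\in\alt(A,\sigma)$ for every $x$, a short computation yields
\[
\sigma_K(y)y=\bigl(\sigma(x)x+\alpha\bigr)+\lambda\bigl(\sigma(x)+x\bigr)\in\alt(A,\sigma)\otimes_FK=\alt(A_K,\sigma_K).
\]
Directness of $(A,\sigma)_K$ forces $y=0$, a contradiction. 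The main obstacle is the middle step, where I must import the anisotropy criterion and the directness theorem from \cite{dolphin3} and verify the routine base-change compatibility $\mathfrak{Pf}((A,\sigma)_K)=\mathfrak{Pf}(A,\sigma)_K$; once these are in hand, both the opening reductions and the closing computation are entirely mechanical.
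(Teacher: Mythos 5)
Your proof is correct and follows essentially the same route as the paper: both pass to $K=F(\sqrt\alpha)$, use Lemma~\ref{an} together with Dolphin's anisotropy criterion to keep $\sigma_K$ anisotropic, and then contradict anisotropy via the element $y=x+\sqrt\alpha$ after computing $\sigma_K(y)y\in\alt(A_K,\sigma_K)$. The only cosmetic difference is that you invoke directness (\cite[(6.1)]{dolphin3}) to rule out $y\neq0$ where the paper uses Lemma~\ref{dir}, but these are the same fact; you also spell out the reductions $\alpha\neq0$, $\alpha\notin F^{\times2}$ and the base-change compatibility of the Pfister invariant, which the paper leaves implicit.
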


\begin{proof}
Let $\alpha\in D(A,\sigma)$.
Then there exists $0\neq x\in A$ such that $\sigma(x)x+\alpha\in\alt(A,\sigma)$.
Suppose that $\alpha\notin D(\mathfrak{Pf}(A,\sigma))$ and set $K=F(\sqrt{\alpha})$.
Since $\mathfrak{Pf}(A,\sigma)$ is a Pfister form, \cite[(7.5 (1))]{dolphin3} implies that $\mathfrak{Pf}(A,\sigma)$ is anisotropic.
Hence $\mathfrak{Pf}(A,\sigma)_K$ is also anisotropic by Lemma \ref{an}.
By \cite[(7.5 (2))]{dolphin3}, $(A,\sigma)_K$ is anisotropic.
As $\alt((A,\sigma)_K)=\alt(A,\sigma)\otimes K$, we have
$\sigma_K(x\otimes1)\cdot(x\otimes1)+\alpha\otimes1\in\alt((A,\sigma)_K)$.
Set $y=x\otimes1+1\otimes\sqrt{\alpha}\in A_K$.
Then $y\neq0$ and
\begin{align*}
  \sigma_K(y)y&=\sigma_K(x\otimes1+1\otimes\sqrt{\alpha})\cdot(x\otimes1+1\otimes\sqrt{\alpha})\\
  &=\sigma_K(x\otimes1)\cdot(x\otimes1)+\alpha\otimes1+(1\otimes\sqrt{\alpha})(x\otimes1+\sigma_K(x\otimes1)).
\end{align*}
We therefore obtain $\sigma_K(y)y\in\alt((A,\sigma)_K)$, i.e., $0\in D((A,\sigma)_K)$.
This contradicts Lemma \ref{dir}, because $(A,\sigma)_K$ is anisotropic.
\end{proof}

\begin{cor}\label{anlam}
Let $(A,\sigma)$ be a totally decomposable algebra with anisotropic orthogonal involution over $F$.
If
$\alpha+\sum_{i=1}^m \sigma(x_i)x_i\in\alt(A,\sigma)$ for some $\alpha\in F$ and $x_1,\cdots,x_m\in A$, then $\alpha\in Q(\mathfrak{Pf}(A,\sigma))$.
\end{cor}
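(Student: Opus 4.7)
My plan is to collapse the sum $\sum_{i=1}^m\sigma(x_i)x_i$ into a single square $\sigma(x)x$ modulo $\alt(A,\sigma)$, so that the single-element statement of Lemma \ref{lam} applies.

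The key ingredient is the identity $\sigma(a)b+\sigma(b)a\in\alt(A,\sigma)$ valid for all $a,b\in A$. Indeed, setting $z=\sigma(a)b$ one has $\sigma(z)=\sigma(b)a$, and since $\car F=2$ the element $z+\sigma(z)=\sigma(z)-z$ lies in $\alt(A,\sigma)$ by definition. Taking $x=\sum_{i=1}^m x_i$ and expanding, this identity applied to each cross pair yields
\[
\sigma(x)x=\sum_{i=1}^m\sigma(x_i)x_i+\sum_{1\leqslant i<j\leqslant m}\bigl(\sigma(x_i)x_j+\sigma(x_j)x_i\bigr)\equiv\sum_{i=1}^m\sigma(x_i)x_i\pmod{\alt(A,\sigma)},
\]
so the hypothesis rewrites as $\alpha+\sigma(x)x\in\alt(A,\sigma)$.

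To finish, I would split into two cases. If $x\neq 0$, the definition of $D(A,\sigma)$ immediately yields $\alpha\in D(A,\sigma)$, and Lemma \ref{lam} then upgrades this to $\alpha\in D(\mathfrak{Pf}(A,\sigma))\subseteq Q(\mathfrak{Pf}(A,\sigma))$. If $x=0$, then $\sigma(x)x=0$ and the hypothesis reduces to $\alpha\in F\cap\alt(A,\sigma)$; since $\sigma$ is orthogonal we have $1\notin\alt(A,\sigma)$ by \cite[(2.6)]{knus}, and because $\alt(A,\sigma)$ is an $F$-subspace this forces $F\cap\alt(A,\sigma)=\{0\}$, whence $\alpha=0\in Q(\mathfrak{Pf}(A,\sigma))$.

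There is no real obstacle here: the only conceptual step is recognising the polarisation identity $\sigma(a)b+\sigma(b)a\in\alt(A,\sigma)$, which is what allows the many-term hypothesis to be folded into a single-element one; after that, the statement is a direct consequence of Lemma \ref{lam} together with the standard fact that $F$ meets $\alt(A,\sigma)$ trivially for an orthogonal involution.
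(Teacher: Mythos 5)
Your proof is correct and follows essentially the same route as the paper's: both set $x=\sum x_i$, use the polarization identity $\sigma(a)b+\sigma(b)a=\sigma(z)-z\in\alt(A,\sigma)$ to collapse the cross terms, and then split into the cases $x\neq0$ (apply Lemma~\ref{lam}) and $x=0$ (use orthogonality to get $F\cap\alt(A,\sigma)=\{0\}$). No meaningful difference.
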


\begin{proof}
Set $y=\sum_{i=1}^mx_i\in A$.
Then
\begin{align*}
  \alpha+\sigma(y)y &=\textstyle\alpha+\sum\limits _{i=1}^m\sigma(x_i)\cdot \sum\limits_{i=1}^mx_i=\alpha+\sum\limits_{i=1}^m\sigma(x_i)x_i+\sum\limits_{i\neq j}\sigma(x_j)x_i\\
  &=\textstyle(\alpha+\sum\limits_{i=1}^m\sigma(x_i)x_i)+\sum\limits_{i< j}(\sigma(x_j)x_i+\sigma(x_i)x_j)\\
  &=\textstyle(\alpha+\sum\limits_{i=1}^m\sigma(x_i)x_i)+\sum\limits_{i< j}(\sigma(\sigma(x_i)x_j)-\sigma(x_i)x_j).
\end{align*}
Hence, $\alpha+\sigma(y)y\in\alt(A,\sigma)$.
If $y=0$, then $\alpha\in\alt(A,\sigma)$ and the orthogonality of $\sigma$ implies that $\alpha=0\in Q(\mathfrak{Pf}(A,\sigma))$.
Otherwise, we have $\alpha\in D(A,\sigma)$ and the result follows from Lemma \ref{lam}.
\end{proof}

\begin{rem}\label{rem}
Let $(B,\rho)$ be a central simple algebra with involution over $F$ and set $(A,\sigma)=(B,\rho)\otimes(M_2(F),t)$.
Then every element $x\in A$ can be written as $\left(\begin{smallmatrix}a & b\\c & d\end{smallmatrix}\right)$, where $a,b,c,d\in B$.
The involution $\sigma$ maps $x$ to $\left(\begin{smallmatrix}
                                             \rho(a) & \rho(c) \\ \rho(b) & \rho(d)
                                            \end{smallmatrix}\right)$.
It follows that
\begin{align*}
\alt(A,\sigma)&=\{\left(\begin{matrix}
  a & b \\
  \rho(b) & c
\end{matrix}\right)\mid a,c\in\alt(B,\rho)\ \  {\rm and} \ \ b\in B\},\\
\sym(A,\sigma)&=\{\left(\begin{matrix}a & b \\ \rho(b) & c\end{matrix}\right)\mid a,c\in\sym(B,\rho)\ \  {\rm and} \ \ b\in B\}.
\end{align*}
\end{rem}

\begin{lem}\label{1}
If $\alpha I+\sum_{i=1}^mA_i^tA_i\in\alt(M_2(F),t)$, where $A_1,\cdots,A_m\in M_2(F)$ and $I\in M_2(F)$ is the identity matrix, then $\alpha\in F^2$.
\end{lem}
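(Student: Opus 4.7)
The plan is to unpack the definition of $\alt(M_2(F),t)$ in characteristic two and then compute the diagonal of the given expression explicitly. Since $t$ is the transpose involution, for any $X\in M_2(F)$ one has $\sigma(X)-X=X^t+X$ (we are in characteristic two). Writing $X=\left(\begin{smallmatrix}a&b\\c&d\end{smallmatrix}\right)$ gives $X^t+X=\left(\begin{smallmatrix}0&b+c\\b+c&0\end{smallmatrix}\right)$, so
\[\alt(M_2(F),t)=\Bigl\{\left(\begin{matrix}0&e\\e&0\end{matrix}\right)\;\Big|\;e\in F\Bigr\}.\]
Thus a matrix is alternating if and only if both of its diagonal entries vanish.

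The second step is to compute the diagonal of $A_i^tA_i$. If $A_i=\left(\begin{smallmatrix}a_i&b_i\\c_i&d_i\end{smallmatrix}\right)$, then a direct calculation shows that the $(1,1)$-entry of $A_i^tA_i$ equals $a_i^2+c_i^2$. Requiring the $(1,1)$-entry of $\alpha I+\sum_{i=1}^m A_i^tA_i$ to be zero then gives the identity
\[\alpha=\sum_{i=1}^m(a_i^2+c_i^2)\quad\text{in }F.\]

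Finally, the key characteristic-two observation finishes the proof: in any field of characteristic two, squaring is additive, so $\sum_{i=1}^m(a_i^2+c_i^2)=\bigl(\sum_{i=1}^m(a_i+c_i)\bigr)^2$. Hence $\alpha$ is a square in $F$, as required.

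There is no serious obstacle here; the statement is essentially a bookkeeping exercise. The only thing to be careful about is the correct form of $\alt(M_2(F),t)$ in characteristic two (it is strictly smaller than $\sym(M_2(F),t)$, consisting of those symmetric matrices whose diagonal vanishes), after which the computation is immediate and the Frobenius-additivity of squaring converts a sum of squares into a single square.
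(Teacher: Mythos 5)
Your proof is correct and follows essentially the same approach as the paper: identify $\alt(M_2(F),t)$ as the symmetric matrices with vanishing diagonal, read off the $(1,1)$-entry of $\alpha I+\sum A_i^tA_i$, and use Frobenius additivity of squaring in characteristic two. The only difference is that you spell out the final step $\sum(a_i^2+c_i^2)=(\sum(a_i+c_i))^2$, which the paper leaves implicit.
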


\begin{proof}
Write
$A_i=\left(\begin{smallmatrix}a_i & b_i \\c_i & d_i\end{smallmatrix}\right)$
for some $a_i,b_i,c_i,d_i\in F$, so that
\begin{align*}
A_i^tA_i=\left(\begin{matrix}a_i^2+c_i^2 & a_ib_i+c_id_i \\a_ib_i+c_id_i & b_i^2+d_i^2 \end{matrix}\right).
\end{align*}
Since $\alt(M_2(F),t)$ is the set of symmetric matrices with zero diagonal, the assumption implies that $\alpha+\sum_{i=1}^ma_i^2+c_i^2=0$, hence $\alpha\in F^2$.
\end{proof}

\begin{lem}\label{prop}
Let $(A,\sigma)$ be a totally decomposable algebra with orthogonal involution over $F$.
If $\alpha+\sum_{i=1}^m\sigma(x_i)x_i\in\alt(A,\sigma)$ for some $\alpha\in F$ and  $x_1,\cdots,x_m\in A$, then $\alpha\in Q(\mathfrak{Pf}(A,\sigma))$.
\end{lem}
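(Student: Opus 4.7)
The plan is to reduce to Corollary \ref{anlam} by induction on $r:=\mathfrak{i}(A,\sigma)$. When $r=0$, the discussion preceding Lemma \ref{bas} shows that $\sigma$ is anisotropic, so the conclusion is exactly Corollary \ref{anlam}.

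For the inductive step, assume $r\geqslant 1$. The paragraph preceding Lemma \ref{bas} yields $(A,\sigma)\simeq(M_{2^r}(F),t)\otimes(B,\rho)$ with $\rho$ anisotropic, and using $(M_{2^r}(F),t)\simeq(M_2(F),t)\otimes(M_{2^{r-1}}(F),t)$ I would rewrite this as $(A,\sigma)\simeq(M_2(F),t)\otimes(A',\sigma')$, where $(A',\sigma'):=(M_{2^{r-1}}(F),t)\otimes(B,\rho)$. Since $\mathfrak{Pf}(A',\sigma')\simeq\lla 1\rra^{r-1}\otimes\mathfrak{Pf}(B,\rho)$ and $\mathfrak{Pf}(B,\rho)$ is anisotropic (being the Pfister invariant of the anisotropic $(B,\rho)$), we get $\mathfrak{i}(A',\sigma')=r-1<r$, so the induction hypothesis is available for $(A',\sigma')$.

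Next I would apply Remark \ref{rem}: writing each $x_i=\left(\begin{smallmatrix}a_i & b_i\\ c_i & d_i\end{smallmatrix}\right)$ with entries in $A'$, a direct computation gives the $(1,1)$-block of $\sigma(x_i)x_i$ as $\sigma'(a_i)a_i+\sigma'(c_i)c_i$. Combined with the block description of $\alt(A,\sigma)$ in Remark \ref{rem}, the hypothesis $\alpha+\sum_{i=1}^m\sigma(x_i)x_i\in\alt(A,\sigma)$ forces
\[
\alpha+\sum_{i=1}^m\sigma'(a_i)a_i+\sum_{i=1}^m\sigma'(c_i)c_i\in\alt(A',\sigma'),
\]
which is a relation of exactly the same form as the hypothesis but inside $(A',\sigma')$ and with $2m$ terms. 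The inductive hypothesis therefore yields $\alpha\in Q(\mathfrak{Pf}(A',\sigma'))$.

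To conclude, I would use that $\disc t$ is trivial on $M_2(F)$ (as noted in the proof of Lemma \ref{3}), so by the definition of the Pfister invariant $\mathfrak{Pf}(A,\sigma)\simeq\lla 1\rra\otimes\mathfrak{Pf}(A',\sigma')\simeq\mathfrak{Pf}(A',\sigma')\perp\mathfrak{Pf}(A',\sigma')$. In particular $D(\mathfrak{Pf}(A',\sigma'))\subseteq D(\mathfrak{Pf}(A,\sigma))$, and since $0\in Q(\mathfrak{Pf}(A,\sigma))$ is automatic, we obtain $\alpha\in Q(\mathfrak{Pf}(A,\sigma))$. The only genuinely new ingredient beyond Corollary \ref{anlam} is the inductive descent through a single $(M_2(F),t)$-factor; I expect the main (mild) obstacle to be correctly accounting for the structural factorization and verifying that the diagonal extraction afforded by Remark \ref{rem} produces a relation of the same shape, rather than something weaker.
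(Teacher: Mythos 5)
Your argument is correct and is essentially the paper's proof, differing only in bookkeeping: the paper inducts on $n$ with $\deg_FA=2^n$ (splitting into an anisotropic case via Corollary \ref{anlam} and an explicit $n=1$, $\sigma$ isotropic computation in Lemma \ref{1}), while you induct on $r=\mathfrak{i}(A,\sigma)$, reaching the base case $r=0$ via Corollary \ref{anlam} alone. The $(1,1)$-block extraction through Remark \ref{rem} is identical in both; your version silently absorbs Lemma \ref{1} into the inductive step (the case $n=r=1$ descends to the degenerate algebra $(F,\mathrm{id})$, which is still a totally decomposable algebra with anisotropic orthogonal involution, so Corollary \ref{anlam} applies, but this would be worth stating explicitly).
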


\begin{proof}
Let $\deg_FA=2^n$.
We use induction on $n$.
If $\sigma$ is anisotropic, the conclusion follows from Corollary \ref{anlam}.
Suppose that $\sigma$ is isotropic.
If $n=1$, then $(A,\sigma)\simeq(M_2(F),t)$ and $\mathfrak{Pf}(A,\sigma)\simeq\lla1\rra$ and the result follows from Lemma \ref{1}.
Let $n\geqslant2$.
Since $\mathfrak{i}(A,\sigma)>0$, we may identify $(A,\sigma)=(B,\rho)\otimes(M_2(F),t)$, where $(B,\rho)$ is a totally decomposable algebra with orthogonal involution over $F$.
Then $\mathfrak{Pf}(A,\sigma)\simeq\mathfrak{Pf}(B,\rho)\otimes\lla1\rra$, which implies that $Q(\mathfrak{Pf}(A,\sigma))=Q(\mathfrak{Pf}(B,\rho))$.
Write
$x_i=\left(\begin{smallmatrix}a_i & b_i \\c_i & d_i\end{smallmatrix}\right)$,
where $a_i,b_i,c_i,d_i\in B$.
Using Remark \ref{rem} we get
\begin{align*}
\sigma(x_i)x_i=\left(\begin{matrix}\rho(a_i) & \rho(c_i) \\\rho(b_i) & \rho(d_i)\end{matrix}\right)\cdot\left(\begin{matrix}a_i & b_i \\c_i & d_i\end{matrix}\right).
\end{align*}
Hence the $(1,1)$-entry of $\sum_{i=1}^m\sigma(x_i)x_i$ is $\sum_{i=1}^m(\rho(a_i)a_i+\rho(c_i)c_i)$.
Since $\alpha+\sum_{i=1}^m\sigma(x_i)x_i\in \alt(A,\sigma)$, Remark \ref{rem} implies that \[\textstyle\alpha+\sum\limits_{i=1}^m(\rho(a_i)a_i+\rho(c_i)c_i)\in\alt(B,\rho).\]
By induction hypothesis we get $\alpha\in Q(\mathfrak{Pf}(B,\rho))=Q(\mathfrak{Pf}(A,\sigma))$.
\end{proof}

\begin{cor}\label{Q}
Let $(A,\sigma)$ be a totally decomposable algebra with orthogonal involution over $F$.
If $x\in\sym(A,\sigma)^+$, then $x^2\in Q(\mathfrak{Pf}(A,\sigma))$.
\end{cor}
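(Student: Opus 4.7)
The plan is to observe that the hypothesis $x\in\sym(A,\sigma)^+$ immediately places us in the setting of Lemma \ref{prop} with only a single summand. Indeed, set $\alpha=x^2\in F$. Since $\sigma(x)=x$, we have $\sigma(x)x=x^2=\alpha$, and because $\car F=2$ this gives
\[
\alpha+\sigma(x)x=\alpha+\alpha=0\in\alt(A,\sigma).
\]

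With this observation in hand, I would simply invoke Lemma \ref{prop} taking $m=1$ and $x_1=x$ to conclude that $\alpha=x^2\in Q(\mathfrak{Pf}(A,\sigma))$, which is exactly the claim. (The edge case $x=0$ is harmless: then $\alpha=0$, which lies in $Q(\mathfrak{Pf}(A,\sigma))$ by definition.)

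There is essentially no obstacle here, since all the real work was done in Lemma \ref{prop} and in the preliminary lemmas (Lemma \ref{lam}, Corollary \ref{anlam}, Lemma \ref{1}) on which it rests. The only subtle point worth highlighting in the write-up is the use of characteristic two to turn the symmetric relation $\sigma(x)x=x^2$ into the alternating condition $\alpha+\sigma(x)x=0$; in characteristic different from two one would instead obtain $\alpha+\sigma(x)x=2\alpha$, which would not be alternating in general. So the corollary is genuinely a characteristic-two phenomenon packaged on top of Lemma \ref{prop}.
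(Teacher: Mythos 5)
Your proof is correct and is essentially the same as the paper's: both set $\alpha=x^2$, observe $\alpha+\sigma(x)x=0\in\alt(A,\sigma)$, and invoke Lemma \ref{prop}. The extra remarks about $x=0$ and the role of characteristic two are fine but add nothing to the argument the paper already gives.
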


\begin{proof}
Set $\alpha=x^2\in F$.
Then $\alpha+\sigma(x)x=0\in\alt(A,\sigma)$ and the result follows from Lemma \ref{prop}.
\end{proof}

\begin{thm}\label{rep}
If $(A,\sigma)$ is a totally decomposable algebra with orthogonal invo\-lution over $F$, then $D(A,\sigma)=D(\mathfrak{Pf}(A,\sigma))=\{x^2\mid 0\neq x\in S\}$, where $S$ is any inseparable subalgebra of $(A,\sigma)$.
\end{thm}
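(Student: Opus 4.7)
My plan is to establish the chain of set equalities by proving two inclusions and one identification, namely
\[
\{x^2\mid 0\neq x\in S\}\subseteq D(A,\sigma)\subseteq D(\mathfrak{Pf}(A,\sigma))=\{x^2\mid 0\neq x\in S\}.
\]
The last equality is essentially built in: by \cite[(5.5)]{mn1} recalled in the text, one may identify the inseparable subalgebra $S$ with the underlying vector space of $\mathfrak{Pf}(A,\sigma)$ in such a way that $\mathfrak{Pf}(A,\sigma)(x,x)=x^2$ for every $x\in S$. Reading off the definition of $D(\mathfrak{b})$ then gives $D(\mathfrak{Pf}(A,\sigma))=\{x^2\mid 0\neq x\in S\}$ at once, with no calculation to do.

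For the first inclusion, I would take $0\neq x\in S$. Since $S\subseteq\sym(A,\sigma)^+$ we have $\sigma(x)=x$ and $\alpha:=x^2\in F$, and in characteristic two
\[
\sigma(x)x+\alpha=x^2+x^2=0\in\alt(A,\sigma),
\]
so $\alpha\in D(A,\sigma)$ by definition.

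The middle inclusion is where the earlier work of the section does the heavy lifting. Given $\alpha\in D(A,\sigma)$ there exists $0\neq x\in A$ with $\sigma(x)x+\alpha\in\alt(A,\sigma)$, so Lemma~\ref{prop} (applied with $m=1$) yields $\alpha\in Q(\mathfrak{Pf}(A,\sigma))=D(\mathfrak{Pf}(A,\sigma))\cup\{0\}$. If $\alpha\neq0$ this gives $\alpha\in D(\mathfrak{Pf}(A,\sigma))$ immediately. The only subtle point, and the one I expect to require a short separate argument, is the case $\alpha=0$: here I need to check that $0\in D(\mathfrak{Pf}(A,\sigma))$, i.e.\ that $\mathfrak{Pf}(A,\sigma)$ is isotropic. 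But $\alpha=0\in D(A,\sigma)$ means $\sigma$ is isotropic, and the discussion after the notation for $\mathfrak{i}(A,\sigma)$ (namely \cite[(5.7)]{dolphin3}) shows that $\sigma$ isotropic forces $\mathfrak{i}(A,\sigma)>0$, so $\mathfrak{Pf}(A,\sigma)\simeq\lla 1\rra\otimes\mathfrak{b}'$ is isotropic, giving $0\in D(\mathfrak{Pf}(A,\sigma))$ and completing the proof.

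With these three ingredients assembled, both chains of inclusions close up and the three sets coincide. The only genuine obstacle is bookkeeping around whether $0$ lies in $D(\mathfrak{Pf}(A,\sigma))$, which is handled by translating isotropy of $\sigma$ into isotropy of the Pfister invariant via $\mathfrak{i}(A,\sigma)$; everything else is either the previous lemma or a one-line identification.
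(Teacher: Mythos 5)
Your proof is correct and follows essentially the same route as the paper: both directions are established exactly the way the paper does (the forward inclusion via $\sigma(x)x+\alpha=0$ for $x\in S$, the reverse via Lemma~\ref{prop} with a separate check that $\sigma$ isotropic forces $\mathfrak{Pf}(A,\sigma)$ isotropic), and the third equality is the one-line consequence of \cite[(5.5)]{mn1}. The only cosmetic difference is that for the $\alpha=0$ case you route through $\mathfrak{i}(A,\sigma)>0$ via \cite[(5.7)]{dolphin3}, whereas the paper cites \cite[(7.5) and (6.2)]{dolphin3}; both reach the same conclusion.
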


\begin{proof}
If $\alpha\in D(\mathfrak{Pf}(A,\sigma))$, then there exists $0\neq x\in S$ such that $x^2=\mathfrak{Pf}(A,\sigma)(x,x)=\alpha$. Hence, $\sigma(x)x+\alpha=0\in\alt(A,\sigma)$, i.e., $\alpha\in D(A,\sigma)$.
Conversely, let $\alpha\in D(A,\sigma)$.
If $\alpha=0$, then $\sigma$ is isotropic by Lemma \ref{dir}.
Hence, $\mathfrak{Pf}(A,\sigma)$ is isotopic by \cite[(7.5) and (6.2)]{dolphin3}, i.e, $0\in D(\mathfrak{Pf}(A,\sigma))$.
Otherwise, as $Q(\mathfrak{Pf}(A,\sigma))=D(\mathfrak{Pf}(A,\sigma))\cup\{0\}$, the result follows from Lemma \ref{prop}.
The equality $D(\mathfrak{Pf}(A,\sigma))=\{x^2\mid 0\neq x\in S\}$ follows from the relation $\mathfrak{Pf}(A,\sigma)(x,x)=x^2$ for $x\in S$.
\end{proof}

\begin{lem}\label{ib}
Let $\mathfrak{b}$ be a bilinear $n$-fold Pfister form over $F$.
If $\alpha\in Q(\mathfrak{b})\setminus F^2$, then $\mathfrak{i}(\mathfrak{b}_{F(\sqrt\alpha)})=\mathfrak{i}(\mathfrak{b})+1$.
\end{lem}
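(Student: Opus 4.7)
\emph{Proof plan.} Set $K=F(\sqrt\alpha)$ and $r=\mathfrak{i}(\mathfrak{b})$, so that $\mathfrak{b}\simeq\lla 1\rra^r\otimes\mathfrak{b}'$ with $\mathfrak{b}'$ an anisotropic bilinear $(n-r)$-fold Pfister form over $F$. The strategy is to pin down $\mathfrak{i}(\mathfrak{b}_K)$ through a single numerical invariant, namely $\dim_{F^2}Q(\mathfrak{b})$, and compare it to $\dim_{K^2}Q(\mathfrak{b}_K)$.

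The first step is to establish the identity $\dim_{F^2}Q(\mathfrak{b})=2^{n-\mathfrak{i}(\mathfrak{b})}$, valid for every bilinear $n$-fold Pfister form $\mathfrak{b}$ over $F$. Since $\mathfrak{b}$ is the orthogonal sum of $2^r$ copies of $\mathfrak{b}'$, and $Q$ of an orthogonal sum is the $F^2$-sum of the $Q$'s of the summands, the fact that $Q(\mathfrak{b}')$ is closed under addition yields $Q(\mathfrak{b})=Q(\mathfrak{b}')$. Diagonalizing $\mathfrak{b}'\simeq\lla a_1,\ldots,a_{n-r}\rra\simeq\langle a_I:I\subseteq\{1,\ldots,n-r\}\rangle$ with $a_I=\prod_{i\in I}a_i$, the anisotropy of $\mathfrak{b}'$ is exactly the $F^2$-linear independence of the family $\{a_I\}_I$, and hence $\dim_{F^2}Q(\mathfrak{b}')=2^{n-r}$.

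Next I compute $Q(\mathfrak{b}_K)$ explicitly. Any vector of the underlying $K$-space has the form $v\otimes 1+w\otimes\sqrt\alpha$ with $v,w$ in the underlying $F$-space, and the characteristic-$2$ cancellation of the cross term $2\sqrt\alpha\,\mathfrak{b}(v,w)$ gives $\mathfrak{b}_K(v\otimes 1+w\otimes\sqrt\alpha,\,v\otimes 1+w\otimes\sqrt\alpha)=\mathfrak{b}(v,v)+\alpha\,\mathfrak{b}(w,w)$, so $Q(\mathfrak{b}_K)=Q(\mathfrak{b})+\alpha Q(\mathfrak{b})$ as $F^2$-subspaces of $F$. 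Now I invoke roundness of the bilinear Pfister form $\mathfrak{b}$: because $\alpha\in Q(\mathfrak{b})\setminus F^2\subseteq D(\mathfrak{b})$, one has $\alpha\mathfrak{b}\simeq\mathfrak{b}$ and therefore $\alpha Q(\mathfrak{b})=Q(\mathfrak{b})$, collapsing the computation to $Q(\mathfrak{b}_K)=Q(\mathfrak{b})$ as subsets of $F$.

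Finally, $Q(\mathfrak{b}_K)$ is by construction a $K^2$-subspace of $K$, and $K^2=F^2+F^2\alpha$ has $F^2$-dimension $2$ (since $\alpha\notin F^2$), so $\dim_{K^2}Q(\mathfrak{b}_K)=\dim_{F^2}Q(\mathfrak{b}_K)/2=2^{n-r-1}$. Applying the identity of the second paragraph to the $n$-fold bilinear Pfister form $\mathfrak{b}_K$ over $K$ yields $2^{n-\mathfrak{i}(\mathfrak{b}_K)}=2^{n-r-1}$, whence $\mathfrak{i}(\mathfrak{b}_K)=r+1=\mathfrak{i}(\mathfrak{b})+1$. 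The main obstacle is the identity $\dim_{F^2}Q(\mathfrak{b})=2^{n-\mathfrak{i}(\mathfrak{b})}$; once one accepts that anisotropy of a diagonalized bilinear Pfister form in characteristic $2$ is equivalent to $F^2$-linear independence of its diagonal entries, the rest reduces to routine linear algebra and the roundness of Pfister forms.
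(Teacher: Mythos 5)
Your argument is correct, but it takes a genuinely different route from the paper's. The paper writes $\mathfrak{b}\simeq\lla1\rra^r\otimes\mathfrak{b}'$ with $\mathfrak{b}'$ anisotropic, uses the Arason--Baeza pure-subform lemma to produce an isometry $\mathfrak{b}'\simeq\lla\alpha+\lambda^2,\alpha_2,\ldots,\alpha_s\rra$, and invokes Hoffmann's Witt-kernel theorem to see that $\lla\alpha_2,\ldots,\alpha_s\rra_K$ stays anisotropic over $K=F(\sqrt\alpha)$, giving $\mathfrak{i}(\mathfrak{b}'_K)=1$ exactly. You instead characterize $\mathfrak{i}$ numerically by the identity $\dim_{F^2}Q(\mathfrak{b})=2^{n-\mathfrak{i}(\mathfrak{b})}$, compute $Q(\mathfrak{b}_K)=Q(\mathfrak{b})+\alpha\,Q(\mathfrak{b})=Q(\mathfrak{b})$ via the characteristic-$2$ vanishing of cross terms together with roundness of bilinear Pfister forms, and finish by dividing by $[K^2:F^2]=2$. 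Your route trades the Arason--Baeza lemma and Hoffmann's theorem for the more elementary roundness, and packages $\mathfrak{i}$ as an $F^2$-codimension invariant that makes the increment by one self-evident; the paper's route instead exhibits an explicit anisotropic Pfister factor for $\mathfrak{b}_K$ rather than a dimension count. Both depend on the decomposition $\mathfrak{b}\simeq\lla1\rra^r\otimes\mathfrak{b}'$ and on the fact that a diagonal bilinear form is anisotropic precisely when its diagonal entries are $F^2$-linearly independent.
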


\begin{proof}
Set $K=F(\sqrt\alpha)$ and $r=\mathfrak{i}(\mathfrak{b})$.
As $Q(\lla1\rra)=F^2$ and $\alpha\in Q(\mathfrak{b})\setminus F^2$ we have $\mathfrak{b}\not\simeq\lla1\rra^n$, i.e., $r<n$.
Write $\mathfrak{b}\simeq\lla1\rra^r\otimes\mathfrak{b}'$ for some anisotropic bilinear Pfister form $\mathfrak{b}'$ over $F$.
Since $Q(\mathfrak{b})=Q(\mathfrak{b}')$, we have $\alpha\in Q(\mathfrak{b}')$.
Hence, the pure subform of $\mathfrak{b}'$ represents $\alpha+\lambda^2$ for some $\lambda\in F$.
By \cite[A.2]{arason}, there exist $\alpha_2,\cdots,\alpha_s\in F$ such that $\mathfrak{b}'\simeq\lla \alpha+\lambda^2,\alpha_2,\cdots,\alpha_s\rra$.
Note that we have $\alpha+\lambda^2\in K^{\times2}$, hence $\mathfrak{b}'_K\simeq\lla1,\alpha_2,\cdots,\alpha_s\rra_K$.
Since $\mathfrak{b}'=\lla \alpha+\lambda^2\rra\otimes\lla\alpha_2,\cdots,\alpha_s\rra$ is anisotropic and $K=F(\sqrt{\alpha+\lambda^2})$, by \cite[(4.2)]{hof} the form $\lla\alpha_2,\cdots,\alpha_s\rra_K$ is anisotropic.
Hence $\mathfrak{i}(\mathfrak{b}'_K)=1$, which implies that $\mathfrak{i}(\mathfrak{b}_{K})=r+1=\mathfrak{i}(\mathfrak{b})+1$.
\end{proof}

\begin{cor}\label{ia}
Let $(A,\sigma)$ be a totally decomposable algebra with orthogonal invo\-lution over $F$.
If $x\in\sym(A,\sigma)^+$ with $\alpha:=x^2\notin F^{\times2}$, then $\mathfrak{i}((A,\sigma)_{F(\sqrt\alpha)})=\mathfrak{i}(A,\sigma)+1$.
In particular, $(A,\sigma)\not\simeq(M_{2^n}(F),t)$.
\end{cor}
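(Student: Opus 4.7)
The plan is to deduce this corollary directly from Corollary \ref{Q} and Lemma \ref{ib} applied to the Pfister invariant, using the fact that the Pfister invariant commutes with scalar extension.

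First I would set $K=F(\sqrt{\alpha})$, which is a proper quadratic field extension since $\alpha\in F^\times\setminus F^{\times2}$. By Corollary \ref{Q}, the element $\alpha=x^2$ lies in $Q(\mathfrak{Pf}(A,\sigma))$. Since $F^2=F^{\times2}\cup\{0\}$ and $\alpha\neq0$, the assumption $\alpha\notin F^{\times2}$ gives $\alpha\notin F^2$. Writing $\mathfrak{Pf}(A,\sigma)$ as the bilinear $n$-fold Pfister form $\lla\alpha_1,\cdots,\alpha_n\rra$ (where $n=\log_2\deg_FA$), the hypotheses of Lemma \ref{ib} are met, yielding
\[
\mathfrak{i}\bigl(\mathfrak{Pf}(A,\sigma)_K\bigr)=\mathfrak{i}\bigl(\mathfrak{Pf}(A,\sigma)\bigr)+1.
\]

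Next I would observe that the Pfister invariant is compatible with scalar extension: starting from any decomposition $(A,\sigma)\simeq\bigotimes_{i=1}^n(Q_i,\sigma_i)$, the classes $\disc\sigma_i$ remain representatives for the discriminants of $\sigma_{i,K}$, so $\mathfrak{Pf}((A,\sigma)_K)\simeq\mathfrak{Pf}(A,\sigma)_K$. Combining this with the previous display and the definition $\mathfrak{i}(A,\sigma)=\mathfrak{i}(\mathfrak{Pf}(A,\sigma))$ gives the claimed equality $\mathfrak{i}((A,\sigma)_K)=\mathfrak{i}(A,\sigma)+1$.

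For the ``in particular'' clause I would argue by contradiction. If $(A,\sigma)\simeq(M_{2^n}(F),t)$, then scalar extension gives $(A,\sigma)_K\simeq(M_{2^n}(K),t)$, and the characterization recalled just before Lemma \ref{bas} yields $\mathfrak{i}((A,\sigma)_K)=n=\mathfrak{i}(A,\sigma)$, contradicting the increment by $1$ established above. The proof is essentially a two-line chain of citations, so there is no real obstacle; the only point requiring any care is checking that the Pfister invariant is preserved under the extension $F\to K$, but this is immediate from its definition in terms of discriminants of quaternion factors.
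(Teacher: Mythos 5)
Your proof is correct and follows the same route as the paper: apply Corollary \ref{Q} to place $\alpha$ in $Q(\mathfrak{Pf}(A,\sigma))\setminus F^2$ and then invoke Lemma \ref{ib}. The extra details you supply (compatibility of the Pfister invariant with scalar extension, and the contradiction argument for the ``in particular'' clause) are exactly what the paper leaves implicit.
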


\begin{proof}
By Corollary \ref{Q}, we have $\alpha\in Q(\mathfrak{Pf}(A,\sigma))$.
Hence, the result follows from Lemma \ref{ib}.
\end{proof}

\section{Stable quaternion subalgebras}\label{sec-quat}
In this section we study some conditions under which a symmetric square-central element of a totally decomposable algebra with orthogonal involution is contained in a stable quaternion subalgebra.
We start with anisotropic involutions.

\begin{thm}\label{cor}
Let $(A,\sigma)$ be a totally decomposable algebra with anisotropic orthogonal involution over $F$.
Then every $x\in\sym(A,\sigma)^+$ is contained in a $\sigma$-invariant quaternion subalgebra of $A$.
\end{thm}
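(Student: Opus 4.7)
The plan is short: the result should follow almost immediately by combining Theorem \ref{main} with the sufficient criterion \cite[(6.3)]{mn1} already alluded to in the introduction. First I would dispose of the trivial case $x\in F$: because $(A,\sigma)$ is totally decomposable, it admits at least one $\sigma$-invariant quaternion subalgebra, and any such subalgebra contains the scalar $x$. Thus we may assume $x\notin F$.

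Next, the key structural input is Theorem \ref{main}: under the anisotropy hypothesis, the unique inseparable subalgebra $S$ coincides with $\sym(A,\sigma)^+$ and equals $\alt(A,\sigma)^+\oplus F$. Consequently the given element $x\in\sym(A,\sigma)^+$ automatically lies in $\alt(A,\sigma)^+\oplus F$, so we may write $x=a+\lambda$ with $a\in\alt(A,\sigma)^+$ and $\lambda\in F$. Writing $\alpha=x^2\in F$, we have $a^2=\alpha+\lambda^2\in F$; and since $x\notin F$ we have $a\neq 0$, while anisotropy applied to $\sigma(a)a=a^2$ forces $a^2\in F^\times$ (so $a$ is in particular a unit). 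These are exactly the data required by the sufficient condition of \cite[(6.3)]{mn1}, which produces a $\sigma$-invariant quaternion subalgebra of $A$ containing $x$.

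Because \cite[(6.3)]{mn1} is doing the heavy lifting, there is no genuine obstacle to the argument itself; the substantive content here is the identification $\sym(A,\sigma)^+=\alt(A,\sigma)^+\oplus F$ supplied by Theorem \ref{main}, which promotes the condition of \cite[(6.3)]{mn1} from a hypothesis to a tautology in the anisotropic setting. The potential pitfall to watch for is ensuring the precise shape of \cite[(6.3)]{mn1} aligns with the hypothesis we verify: in case it additionally demands that $a^2$ be represented by $\mathfrak{Pf}(A,\sigma)$, this is automatic from Corollary \ref{Q}.

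If one wished to avoid the black-box appeal to \cite[(6.3)]{mn1}, the alternative route is constructive: by Corollary \ref{Q} the Pfister invariant represents $a^2\in F^\times$, so standard Pfister form manipulations yield $\mathfrak{Pf}(A,\sigma)\simeq\lla a^2,\alpha'_2,\dots,\alpha'_n\rra$, and a matching decomposition $(A,\sigma)\simeq(Q_1,\sigma_1)\otimes(B,\rho)$ with $a\in\alt(Q_1,\sigma_1)^+$ would place $x=a+\lambda$ inside the $\sigma$-invariant quaternion subalgebra $Q_1$. This last lifting step, from an isometry of Pfister invariants to a tensor decomposition of the algebra with involution aligned with $a$, is the delicate point, and is precisely what \cite[(6.3)]{mn1} is designed to deliver.
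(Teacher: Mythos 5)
Your proposal is correct and follows essentially the same route as the paper: Theorem \ref{main} places $x$ in the (unique) inseparable subalgebra, and \cite[(6.3 (ii))]{mn1} then supplies the $\sigma$-invariant quaternion subalgebra. The only cosmetic difference is in the degenerate case: the paper shows that $x^2\in F^2$ forces $x\in F$ via directness, whereas you assume $x\notin F$ and use anisotropy to make $x+\lambda$ a unit, which amounts to the same verification that $x^2\notin F^2$, the hypothesis actually required by \cite[(6.3 (ii))]{mn1}.
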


\begin{proof}
Since $\sigma$ is anisotropic, Theorem \ref{main} shows that $x$ is contained in the unique inseparable subalgebra of $(A,\sigma)$.
If $x^2=\lambda^2$ for some $\lambda\in F$, then $(x+\lambda)^2=0$.
Hence, $x=\lambda$ by \cite[(6.1)]{dolphin3} and the result is trivial.
Otherwise, $x^2\notin F^2$ and the conclusion follows from \cite[(6.3 (ii))]{mn1}.
\end{proof}

We next consider algebras of degree $4$ and $8$.
\begin{prop}\label{deg4}
Let $(A,\sigma)$ be a totally decomposable algebra of degree $4$ with orthogonal involution over $F$.
If $x\in\sym(A,\sigma)^+$ with $x^2\notin F^2$, then $x$ is contained in a $\sigma$-invariant quaternion subalgebra of $A$.
\end{prop}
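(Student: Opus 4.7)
The plan is to split on whether $\sigma$ is anisotropic: in the anisotropic case, Theorem~\ref{cor} already yields the conclusion without any use of the degree hypothesis, so the substance lies entirely in the isotropic case, which I treat next.

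Assume $\sigma$ is isotropic. Since $x^2\notin F^{\times 2}$, Corollary~\ref{ia} rules out $(A,\sigma)\simeq(M_4(F),t)$; combined with $\deg_FA=2^2$ and $\mathfrak{i}(A,\sigma)\geqslant 1$, this forces $\mathfrak{i}(A,\sigma)=1$, and so $(A,\sigma)\simeq(Q,\tau)\otimes(M_2(F),t)$ with $(Q,\tau)$ an anisotropic quaternion algebra with orthogonal involution. My strategy is to prove that $x$ lies in the inseparable subalgebra $S$ of $(A,\sigma)$, which is unique by Theorem~\ref{exm2}; then \cite[(6.3)(ii)]{mn1} directly produces a $\sigma$-invariant quaternion subalgebra of $A$ containing $x$.

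To place $x$ inside $S$, I would use Lemma~\ref{bas} to fix a quaternion basis $(1,u,v,w)$ of $Q$ with $u,v\in\sym(Q,\tau)$, and a unit $v_3\in\alt(M_2(F),t)$ with $v_3^2=1$, so that $S=F[v\otimes 1,\,1\otimes v_3]$. Under the identification $A=M_2(Q)$ write $x=\begin{psmallmatrix}a & b\\ \tau(b) & d\end{psmallmatrix}$; expanding $x^2=\alpha I$ yields $d=a$, $ab=ba$, and $b\tau(b)=\tau(b)b=\alpha-a^2\in F$. A short computation of $y^2$ for $y\in\sym(Q,\tau)$ in the chosen basis gives $\sym(Q,\tau)^+=F+Fv$, hence $a=\lambda+\nu v$. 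Writing $b=A+Bu+Cv+Dw$ and expanding $b\tau(b)$ in the basis, the $u$-coefficient reads $B^2+D^2v^2$; anisotropy of $(Q,\tau)$ forces $v^2\notin F^2$ (otherwise $v-\sqrt{v^2}\in\sym(Q,\tau)$ would be a nonzero isotropic element, using $\alt(Q,\tau)\cap F=0$), so $B=D=0$ and $b=p+qv\in F+Fv$. Substitution then gives
\[
x=\lambda+\nu(v\otimes 1)+p(1\otimes v_3)+q(v\otimes v_3)\in F[v\otimes 1,\,1\otimes v_3]=S,
\]
and since $x^2\notin F^2$, \cite[(6.3)(ii)]{mn1} completes the proof.

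The main obstacle is the basis-level step forcing $b\in F+Fv$, which hinges on the observation $v^2\notin F^2$ drawn from anisotropy of $(Q,\tau)$; every other ingredient is either a direct appeal to results already established in the paper (Theorem~\ref{cor}, Corollary~\ref{ia}, Theorem~\ref{exm2}, Lemma~\ref{bas}) or routine bookkeeping in the $2\times 2$ matrix presentation of $A=M_2(Q)$.
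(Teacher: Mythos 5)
Your strategy is genuinely different from the paper's. The paper does not try to place $x$ in the inseparable subalgebra at all: it passes to $C=C_A(x)$, a quaternion algebra over $K=F(\sqrt{\alpha})$, notes via Proposition~\ref{met} that $\sigma|_C$ is isotropic and hence $(C,\sigma|_C)\simeq(M_2(F),t)_K$, identifies a copy of $M_2(F)$ inside $C$, and takes $Q=C_A(M_2(F))$, which contains $x$ because $x$ is central in $C$. Your route, if completed, proves the stronger fact that $x$ lies in the unique inseparable subalgebra $S$ (which in degree $4$ is indeed equivalent to the statement, via \cite[(6.3)(ii)]{mn1} and the identification $S=\alt(A,\sigma)^+\oplus F$), but as written it has a real gap at its central step.

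The gap is the sentence ``expanding $x^2=\alpha I$ yields $d=a$, $ab=ba$, and $b\tau(b)=\tau(b)b=\alpha-a^2\in F$.'' Expanding $x^2=\alpha I$ for $x=\begin{psmallmatrix}a&b\\ \tau(b)&d\end{psmallmatrix}$ gives only the relations $a^2+b\tau(b)=\alpha$, $\tau(b)b+d^2=\alpha$ and $ab=bd$; none of $d=a$, $a^2\in F$, $b\tau(b)\in F$ is a formal consequence. In particular $b\tau(b)\in F$ is \emph{equivalent} to $a^2\in F$, i.e.\ to $\trd(a)=0$, and a priori $a$ ranges over the whole three-dimensional space $\sym(Q,\tau)=F+Fu+Fv$; your next step (``$\sym(Q,\tau)^+=F+Fv$, hence $a=\lambda+\nu v$'') already presupposes exactly what is to be shown. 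Since $a$ and $b$ are coupled through $a^2+b\tau(b)=\alpha$, they must be analyzed together. The argument can be repaired: writing $a=\lambda+\mu u+\nu v$ and $b=A+Bu+Cv+Dw$, one has $\trd(a^2)=\mu^2$ and $\trd(b\tau(b))=B^2+D^2v^2$ (this is your $u$-coefficient), so the first relation forces $\mu^2+B^2=D^2v^2$, whence $D=0$ and $\mu=B$ by anisotropy ($v^2\notin F^2$). Then $\tau(b)=b$, and comparing the $v$-coefficient and the constant term in $a^2+b^2=\alpha$ gives $\mu(\nu+C)=0$ and $\lambda^2+A^2+(\nu+C)^2v^2=\alpha$; if $\mu\neq0$ this yields $\alpha=(\lambda+A)^2\in F^2$, a contradiction. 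Only now does one obtain $a,b\in F+Fv$, and then $d=a$ follows from $ab=bd$ after checking that $b$ is either $0$ or a unit. So the approach succeeds, but the step you dismiss as bookkeeping is precisely where the hypotheses $\alpha\notin F^2$ and the anisotropy of $(Q,\tau)$ enter, and it must be carried out.
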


\begin{proof}
By Corollary \ref{ia} we have either $\mathfrak{i}(A,\sigma)=0$ or $\mathfrak{i}(A,\sigma)=1$.
In the first case, the result follows from Theorem \ref{cor}.
Suppose that $\mathfrak{i}(A,\sigma)=1$.
Set $C=C_A(x)$ and $K=F(x)=F(\sqrt\alpha)$.
By Proposition \ref{met}, $(C,\sigma|_C)$ is isotropic, i.e., $(C,\sigma|_C)\simeq(M_2(K),t)\simeq(M_2(F),t)_K$.
Hence, the algebra $M_2(F)$ may be identified with a subalgebra of $C\subseteq A$.
Let $Q=C_A(M_2(F))$.
Then $(Q,\sigma|_Q)$ is a $\sigma$-invariant quaternion subalgebra of $A$ containing $x$.
\end{proof}

\begin{lem}\label{2}
Let $\mathfrak{b}$ be a $2$-dimensional anisotropic symmetric bilinear form over $F$ and let $K=F(\sqrt\alpha)$ for some $\alpha\in F^\times\setminus F^{\times2}$.
If $\mathfrak{b}_K$ is isotropic, then $\mathfrak{b}\simeq\langle a,sa\rangle$ for some $a\in F^\times$ and $s\in K^{\times2}$.
\end{lem}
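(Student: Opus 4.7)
The plan is to diagonalize $\mathfrak{b}$ and then read the desired scalar $s$ off the isotropy relation over $K$; the calculation is short once the diagonalization is in place.

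First I would observe that, because $\mathfrak{b}$ is anisotropic, it cannot be alternating: in characteristic two any $2$-dimensional alternating bilinear form satisfies $\mathfrak{b}(v,v)=0$ for every $v$ and is therefore totally isotropic. Hence there exists $v$ with $\mathfrak{b}(v,v)=a\in F^\times$. Completing $v$ to a basis $(v,u)$ and replacing $u$ by $w:=u+\lambda v$ with $\lambda=\mathfrak{b}(u,v)/a$ yields an orthogonal basis $(v,w)$, and anisotropy forces $\mathfrak{b}(w,w)=:b\in F^\times$. Thus $\mathfrak{b}\simeq\langle a,b\rangle$.

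Next I would unpack the hypothesis. Isotropy of $\mathfrak{b}_K\simeq\langle a,b\rangle_K$ provides $x,y\in K$, not both zero, with $ax^2+by^2=0$. If $y=0$ then $ax^2=0$ forces $x=0$ (as $a\neq 0$), a contradiction, and symmetrically $x\neq 0$. Since $\operatorname{char} F=2$ the relation rewrites as $b=a(x/y)^2$, so setting $s:=b/a\in F^\times$ we obtain $s=(x/y)^2\in K^{\times 2}$.

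Finally, $\mathfrak{b}\simeq\langle a,b\rangle=\langle a,sa\rangle$ with $a\in F^\times$ and $s\in K^{\times 2}$, which is the claim. The only delicate point is guaranteeing the existence of an orthogonal diagonalization in characteristic two, and this is precisely where anisotropy (ruling out the alternating case) is used; everything else is a one-line manipulation of the isotropy relation.
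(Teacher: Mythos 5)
Your proof is correct. The paper itself does not argue the lemma directly: it simply invokes \cite[(3.5)]{lagh} (Laghribi--Mammone's work on hyper-isotropy of bilinear forms in characteristic $2$), of which this is the $2$-dimensional special case. Your argument is a self-contained elementary substitute: the diagonalization step is valid (anisotropy gives $\mathfrak{b}(v,v)=a\neq0$ for any nonzero $v$, and the Gram--Schmidt replacement $w=u+\lambda v$ with $\lambda=\mathfrak{b}(u,v)/a$ kills the off-diagonal entry because $\mathfrak{b}(v,w)=2\mathfrak{b}(u,v)=0$ in characteristic two), and the isotropy relation $ax^2+by^2=0$ over $K$ with $x,y\neq0$ indeed yields $b/a=(x/y)^2\in K^{\times2}\cap F^\times$. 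One could streamline slightly: anisotropy already says $\mathfrak{b}(v,v)\neq0$ for every nonzero $v$, so the detour through ``$\mathfrak{b}$ is not alternating'' is not needed, but it does no harm. What the citation buys the paper is uniformity with the rest of its toolkit and applicability of the more general statement; what your computation buys is transparency and independence from the reference.
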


\begin{proof}
The result follows from \cite[(3.5)]{lagh}.
\end{proof}

We recall that a symmetric bilinear space $(V,\mathfrak{b})$ over $F$ is called {\it metabolic} if there exists a subspace $W$ of $V$ with $\dim_FW=\frac{1}{2}\dim_FV$ such that $\mathfrak{b}|_{W\times W}=0$.
\begin{lem}\label{pfi}
Let $\mathfrak{b}$ be a $4$-dimensional isotropic symmetric bilinear form over $F$.
Let $K=F(\sqrt\alpha)$ for some $\alpha\in F^\times\setminus F^{\times2}$.
If $\mathfrak{b}\otimes\lla\alpha\rra$ is a Pfister form, then $\mathfrak{b}_K$ is similar to a Pfister form.
\end{lem}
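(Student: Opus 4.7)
My plan is to decompose $\mathfrak{b}\simeq\langle d,d\rangle\perp\mathfrak{c}$ as an orthogonal sum of a 2-dimensional diagonal metabolic plane and a 2-dimensional nondegenerate form $\mathfrak{c}$, use the Pfister-form hypothesis to force $\mathfrak{c}_K$ to be metabolic, and then exhibit $\mathfrak{b}_K$ as a 4-dimensional form plainly similar to a 2-fold Pfister form over $K$.

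I would first observe that $\mathfrak{b}$ must be non-alternating: if it were alternating then so would be $\mathfrak{b}\otimes\lla\alpha\rra=\mathfrak{b}\perp\alpha\mathfrak{b}$, contradicting the hypothesis that it is a Pfister form (Pfister forms represent $1$). Since $\mathfrak{b}$ is $4$-dimensional, isotropic, and non-alternating, a short argument produces a nonzero isotropic vector $v$ and an anisotropic vector $w$ with $\mathfrak{b}(v,w)\neq 0$; the $F$-plane they span is a $2$-dimensional nondegenerate metabolic form isometric to $\langle d,d\rangle$ with $d=\mathfrak{b}(w,w)\in F^\times$. Orthogonal complementation then gives $\mathfrak{b}\simeq\langle d,d\rangle\perp\mathfrak{c}$ for some $2$-dimensional nondegenerate form $\mathfrak{c}$.

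Next I would show that $\mathfrak{c}_K$ is metabolic. Since $\mathfrak{b}$ is isotropic, the Pfister form $\mathfrak{b}\otimes\lla\alpha\rra$ is metabolic, and since $\langle d,d\rangle\otimes\lla\alpha\rra$ is also metabolic, the same must hold for $\mathfrak{c}\otimes\lla\alpha\rra$. If $\mathfrak{c}$ is anisotropic, diagonalize $\mathfrak{c}\simeq\langle p,q\rangle$; then $\mathfrak{c}\otimes\lla\alpha\rra\simeq p\lla q/p,\alpha\rra$, whose metabolicity forces $\alpha\in D(\lla q/p\rra)$, i.e.\ $\alpha=u^2+(q/p)v^2$ with $v\neq 0$ (otherwise $\alpha\in F^{\times 2}$). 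Rearranging, $q/p=(\alpha+u^2)/v^2=((\sqrt\alpha+u)/v)^2\in K^{\times 2}$, so $\mathfrak{c}_K\simeq p\langle 1,q/p\rangle_K\simeq\langle p,p\rangle_K$ is metabolic. The subcase where $\mathfrak{c}$ is already isotropic over $F$ is immediate.

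Finally, $\mathfrak{b}_K\simeq\langle d,d\rangle_K\perp\mathfrak{c}_K$ is a $4$-dimensional metabolic form over $K$. When $\mathfrak{c}_K\simeq\langle c,c\rangle_K$, this gives $\mathfrak{b}_K\simeq\langle d,d,c,c\rangle_K=d\lla 1,c/d\rra_K$, plainly similar to a 2-fold Pfister form. In the residual subcase $\mathfrak{c}_K\simeq H_K$ (which arises only when $\mathfrak{c}$ is the alternating hyperbolic plane), a direct change of basis yields $\langle d,d\rangle_K\perp H_K\simeq\langle d,d,d,d\rangle_K=d\lla 1,1\rra_K$, again similar to a Pfister form. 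I expect the main subtlety to be the characteristic-two bookkeeping around the two non-isometric types of 2-dimensional metabolic form ($\langle c,c\rangle$ versus $H$) and the related failure of Witt cancellation between them; the key algebraic input driving the argument is the norm identity $q/p=((\sqrt\alpha+u)/v)^2\in K^{\times 2}$, in essence the same calculation that underlies Lemma \ref{2}.
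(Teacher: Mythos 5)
Your proof is correct and arrives at the same target ($\mathfrak{b}_K$ similar to $\lla 1,c\rra_K$ for an appropriate $c$) by a genuinely different initial decomposition. The paper first separates into two cases: when $\mathfrak{b}$ is metabolic it invokes the classification of $4$-dimensional metabolic forms from Elman--Karpenko--Merkurjev (isometric to $\langle a,a,b,b\rangle$ or $\langle a,a\rangle\perp\mathbb{H}$) and concludes directly over $F$; when $\mathfrak{b}$ is not metabolic it applies Witt decomposition to write $\mathfrak{b}$ as (a scalar multiple of) $\mathfrak{b}'\perp\langle 1,1\rangle$ with $\mathfrak{b}'$ two-dimensional anisotropic, transfers metabolicity of $\mathfrak{b}'\otimes\lla\alpha\rra$ over $F$ to metabolicity of $\mathfrak{b}'_K$ via Hoffmann's theorem, and finishes with Lemma~\ref{2}. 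You instead extract a non-alternating metabolic plane $\langle d,d\rangle$ directly from the isotropy and non-alternation of $\mathfrak{b}$ (your construction of $v,w$ with $\mathfrak{b}(v,v)=0$, $\mathfrak{b}(w,w)\neq 0$, $\mathfrak{b}(v,w)\neq 0$, using nondegeneracy of $\mathfrak{b}$, is correct and the resulting Gram matrix $\left(\begin{smallmatrix}0&1\\1&d\end{smallmatrix}\right)$ is indeed isometric to $\langle d,d\rangle$), and then treat the two-dimensional complement $\mathfrak{c}$ uniformly: if $\mathfrak{c}$ is isotropic it is already metabolic over $F$, and if $\mathfrak{c}$ is anisotropic you carry out the norm computation $q/p=((\sqrt\alpha+u)/v)^2\in K^{\times 2}$, which as you note is exactly the content of Lemma~\ref{2} (via \cite[(3.5)]{lagh}). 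Your route is a bit more self-contained — it bypasses both the EKM classification of metabolic $4$-forms and the explicit appeal to Hoffmann's function-field result \cite[(4.2)]{hof}, replacing them by the elementary plane-extraction and a Witt-class cancellation — while the paper's version is shorter on the page because it offloads those steps to citations. Both hinge on the same final $2$-dimensional computation and on the dichotomy between the metabolic planes $\langle c,c\rangle$ and $\mathbb{H}$, which you correctly flag as the main characteristic-two subtlety. One small point worth making explicit in a write-up: the inference from ``$\mathfrak{b}\otimes\lla\alpha\rra$ and $\langle d,d\rangle\otimes\lla\alpha\rra$ metabolic'' to ``$\mathfrak{c}\otimes\lla\alpha\rra$ metabolic'' uses that a form with trivial Witt class is metabolic, which is the Witt decomposition theorem \cite[(1.27)]{elman} rather than ordinary cancellation (which fails for bilinear forms in characteristic two).
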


\begin{proof}
Since $\mathfrak{b}\otimes\lla\alpha\rra$ is a Pfister form, the form $\mathfrak{b}$ is not alternating.
If $\mathfrak{b}$ is metabolic, then by \cite[(1.24) and (1.22 (3))]{elman} either $\mathfrak{b}\simeq\langle a,a,b,b\rangle$ or $\mathfrak{b}\simeq\langle a,a\rangle\perp\mathbb{H}$, where $a,b\in F^\times$ and $\mathbb{H}$ is the hyperbolic plane.
In the first case $\mathfrak{b}$ is similar to $\langle1,1,ab,ab\rangle=\lla1,ab\rra$.
In the second case, using the isometry $\langle a,a,a\rangle\simeq\langle a\rangle\perp\mathbb{H}$ in \cite[(1.16)]{elman}, we get $\mathfrak{b}\simeq\langle a,a,a,a\rangle$.
Hence, $\mathfrak{b}$ is similar to $\lla1,1\rra$.

Suppose that $\mathfrak{b}$ is not metabolic.
By Witt decomposition theorem \cite[(1.27)]{elman} and the isometry $\langle a,a,a\rangle\simeq\langle a\rangle\perp\mathbb{H}$ for $a\in F^\times$, the form $\mathfrak{b}$ is similar to $\mathfrak{b}'\perp\langle1,1\rangle$ for some $2$-dimensional anisotropic bilinear form $\mathfrak{b}'$ over $F$.
Since $\mathfrak{b}\otimes\langle1,\alpha\rangle$ is an isotropic Pfister form, it is metabolic by \cite[(6.3)]{elman}.
Hence, $\mathfrak{b}'\otimes\langle1,\alpha\rangle$ is also metabolic.
By \cite[(4.2)]{hof} the form $\mathfrak{b}'_K$ is metabolic.
Using Lemma \ref{2}, one can write $\mathfrak{b}'\simeq\langle a,sa\rangle$, where $a\in F^\times$ and $s\in K^{\times2}$.
The form $\mathfrak{b}_K$ is therefore similar to $\langle1,1,a,sa\rangle_K\simeq\langle1,1,a,a\rangle_K\simeq\lla1,a\rra_K$, proving the result
\end{proof}

\begin{lem}\label{4}
Let $(A,\sigma)$ be a central simple algebra of degree $4$ with orthogonal involution over $F$ and let $K/F$ be a separable quadratic extension.
If $(A,\sigma)_K$ is totally decomposable, then $(A,\sigma)$ is also totally decomposable.
\end{lem}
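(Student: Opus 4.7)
The plan is to descend the unique inseparable subalgebra of $(A_K,\sigma_K)$ via Galois theory, and then reconstruct the tensor decomposition over $F$.

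First, since $(A_K,\sigma_K)$ is totally decomposable of degree $4$, the remark in the proof of Theorem \ref{exm2} (citing \cite[(4.4)]{me3}) identifies the unique inseparable subalgebra of $(A_K,\sigma_K)$ as
\[S_K \;=\; \alt(A_K,\sigma_K)^{+}\oplus K,\]
a commutative $K$-subalgebra of $K$-dimension $4$ consisting of $\sigma_K$-symmetric elements whose squares lie in $K$.

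Next, I would extend the non-trivial $F$-automorphism $\iota$ of $K$ to an $F$-automorphism $\widetilde\iota = \id_A\otimes\iota$ of $A_K = A\otimes_F K$. This commutes with $\sigma_K = \sigma\otimes\id_K$, it preserves $\alt(A_K,\sigma_K) = \alt(A,\sigma)\otimes_F K$, and it preserves the property of having square in $K$. By the uniqueness in degree $4$, we get $\widetilde\iota(S_K) = S_K$. Standard Galois descent then yields a $4$-dimensional $F$-subspace $S = S_K^{\widetilde\iota} = A\cap S_K\subseteq A$ with $S\otimes_F K = S_K$. The $F$-algebra structure, commutativity, and the conditions $\sigma(s)=s$ and $s^2\in F$ all descend, so $S$ is a commutative $F$-subalgebra of $A$ with $S\subseteq \sym(A,\sigma)^{+}$.

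The final step is to deduce total decomposability of $(A,\sigma)$ from the existence of such a $4$-dimensional subalgebra $S\subseteq\sym(A,\sigma)^{+}$. Pick $x\in S\setminus F$ and set $\alpha:=x^2\in F$. If $\alpha\in F^{\times 2}$, say $\alpha=\lambda^2$, then $x-\lambda$ is a non-zero nilpotent symmetric element, so $\sigma$ is isotropic; one then invokes the standard structure theory of isotropic degree-$4$ orthogonal involutions to split off a copy of $(M_2(F),t)$, producing a tensor decomposition. Otherwise $\alpha\notin F^{\times 2}$ and $F[x]=F(\sqrt{\alpha})$ is a purely inseparable quadratic extension of $F$; the centralizer $C_A(x)$ is a quaternion $F[x]$-algebra, and I would pick a second generator $y\in S\setminus F[x]$ (with $y^2\in F$) to locate a $\sigma$-invariant quaternion $F$-subalgebra of $A$ containing $x$, whose centralizer provides the complementary factor.

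The main obstacle is this final step: translating the existence of the descended $4$-dimensional commutative $\sigma$-fixed subalgebra into an actual tensor decomposition, without circularly invoking Proposition \ref{deg4} (whose hypothesis is total decomposability). The needed ingredient is essentially a converse: an inseparable subalgebra in degree $4$ forces total decomposability. I would expect to obtain this by directly constructing the quaternion factors from the pair of generators of $S$ together with an appropriately chosen alternating element of $A$, adapting the centralizer-based argument of Proposition \ref{deg4} so that neither factor is assumed from the outset.
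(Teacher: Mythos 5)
Your proposal takes a genuinely different route from the paper, whose entire proof is one line: it cites a classification result for degree-$4$ orthogonal involutions from Knus--Parimala--Sridharan \cite[(7.3)]{pari}, whose defining invariant lives in $F^\times/F^{\times2}$, and observes that $K^{\times2}\cap F^\times = F^{\times2}$ for a separable quadratic extension $K/F$, so the criterion for total decomposability over $K$ descends to $F$. You instead try to descend the inseparable subalgebra itself. Your Steps 1 and 2 are correct: $\widetilde\iota=\id_A\otimes\iota$ is an order-two $\iota$-semilinear $F$-algebra automorphism of $A_K$ commuting with $\sigma_K$; it stabilizes $\alt(A_K,\sigma_K)=\alt(A,\sigma)\otimes_F K$ and the condition ``square in $K$'', hence stabilizes the unique inseparable subalgebra $S_K=\alt(A_K,\sigma_K)^+\oplus K$; and Galois descent produces a $4$-dimensional commutative $F$-subalgebra $S\subseteq\sym(A,\sigma)^+$ with $C_A(S)=S$ and $S\otimes_F K=S_K$.

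However, Step 3 is a genuine gap, as you yourself flag. Nothing in the paper (nor in \cite{mn1}) gives the converse you need, namely that a degree-$4$ algebra with orthogonal involution admitting such a subalgebra $S$ is automatically totally decomposable: the results \cite[(4.6), (6.3)]{mn1} and Proposition~\ref{deg4} all \emph{assume} total decomposability as a hypothesis. Your sketch of Step 3 has further unresolved points within it. In the case $x^2\in F^{\times2}$ you claim isotropy of a degree-$4$ orthogonal involution forces a split factor $(M_2(F),t)$, but isotropic does not imply metabolic, and no reference is given. In the case $x^2\notin F^{\times2}$ you would need to manufacture a $\sigma$-invariant quaternion $F$-subalgebra through $x$ ``from scratch'', which is exactly the nontrivial content that the paper outsources to \cite{pari}. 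So while the Galois-descent portion is a clean idea and could conceivably be made to work, the argument as written stops precisely where the real work lies, and in its present form it does not constitute a proof of the lemma.
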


\begin{proof}
The result follows from \cite[(7.3)]{pari} and the relation $K^{\times2}\cap F^\times=F^{\times2}$.
\end{proof}

\begin{lem}\label{lem}
Let $(A,\sigma)$ be a totally decomposable algebra of degree $2^n$ with orthogonal involution over $F$.
Let $x\in\sym(A,\sigma)^+$ with $x^2\notin F^2$ and set $C=C_A(x)$.
If $(C,\sigma|_C)$ is totally decomposable and $x$ is contained in a $\sigma$-invariant quaternion subalgebra $Q$,
then there exists an inseparable subalgebra of $(A,\sigma)$ containing $x$.
\end{lem}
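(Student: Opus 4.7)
The plan is to use the $\sigma$-invariant quaternion subalgebra $Q$ to split a degree-$2$ factor off $(A,\sigma)$, reduce the problem to producing an inseparable subalgebra of the centralizer $B=C_A(Q)$, and then assemble the two pieces.

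Set $\alpha=x^2\in F^\times\setminus F^{\times2}$ and $K=F[x]=F(\sqrt\alpha)$. Since $Q$ is $\sigma$-invariant and $\sigma$ is orthogonal, the inclusion $\alt(Q,\sigma|_Q)\subseteq\alt(A,\sigma)$ forces $\sigma|_Q$ to be orthogonal. A direct computation in a quaternion basis of $Q$ furnished by Lemma~\ref{bas} then shows $\sym(Q,\sigma|_Q)^+=F\oplus\alt(Q,\sigma|_Q)$, so $x$ lies in the unique inseparable subalgebra of $(Q,\sigma|_Q)$, which is exactly $K$. Setting $B=C_A(Q)$ and $\rho=\sigma|_B$, the $\sigma$-invariance of $Q$ yields $A=Q\otimes_F B$ with $\sigma=\sigma|_Q\otimes\rho$, and a centralizer calculation gives $C=C_A(x)=K\otimes_F B$, identifying $(C,\sigma|_C)$ with $(B,\rho)_K$ as $K$-algebras with involution.

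Granted the key assertion (discussed below) that $(B,\rho)$ is itself totally decomposable with orthogonal involution over $F$, I would pick an inseparable subalgebra $S_B\subseteq\sym(B,\rho)^+$ of $F$-dimension $2^{n-1}$, generated over $F$ by elements $v_1,\dots,v_{n-1}$ with $v_i^2\in F$, and set
\[
S\ :=\ F[x,v_1,\dots,v_{n-1}]\ =\ K\otimes_F S_B\ \subseteq\ K\otimes_F B\ =\ C\ \subseteq\ A.
\]
The verification that $S$ is an inseparable subalgebra of $(A,\sigma)$ containing $x$ is then routine: $\dim_F S=2\cdot 2^{n-1}=2^n$; $S$ is commutative because $K\subseteq Q$ commutes elementwise with $S_B\subseteq B$; each generator is $\sigma$-symmetric, whence $S\subseteq\sym(A,\sigma)$; expanding $s^2$ for $s\in S$ in the monomial basis and using commutativity in characteristic two to cancel cross terms gives
\[
s^2=\sum_{\epsilon\in\{0,1\}^n}c_\epsilon^2\,x^{2\epsilon_0}\prod_{i=1}^{n-1} v_i^{2\epsilon_i}\in F,
\]
since $x^2=\alpha\in F$ and each $v_i^2\in F$; the algebra is generated over $F$ by the $n$ elements $x,v_1,\dots,v_{n-1}$; and
\[
C_A(S)=C_A(K)\cap C_A(S_B)=(K\otimes B)\cap(Q\otimes S_B)=K\otimes S_B=S
\]
by the maximal commutativity of $K$ in $Q$ and of $S_B$ in $B$. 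The containment $x\in S$ follows from $x\in K\subseteq S$.

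The main obstacle is thus the descent of total decomposability from $(B,\rho)_K=(C,\sigma|_C)$ over $K$ down to $(B,\rho)$ over $F$ across the inseparable quadratic extension $K/F$. Orthogonality of $\rho$ descends at once from the equality $\alt(B_K,\rho_K)=\alt(B,\rho)\otimes_F K$. Total decomposability is more delicate: the separable analog Lemma~\ref{4} rests on \cite[(7.3)]{pari} and the identity $K^{\times2}\cap F^\times=F^{\times2}$, which fails in the inseparable setting. I would attempt this descent through the Pfister invariant---first showing that $\mathfrak{Pf}((B,\rho)_K)$ is the scalar extension to $K$ of an $(n-1)$-fold bilinear Pfister form over $F$, and then reconstructing a decomposition of $(B,\rho)$ over $F$ from this form using the results of \cite{dolphin3} together with the representation machinery developed in \S\ref{sec-rep}. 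This is where I expect the main work to lie.
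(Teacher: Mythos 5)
Your reduction is set up correctly: the identifications $A=Q\otimes_F B$, $C=C_A(x)=K\otimes_F B$ and $(C,\sigma|_C)\simeq(B,\rho)_K$ all match the paper, and your assembly of a candidate $S$ from $x$ together with commuting symmetric generators whose squares lie in $F$ is exactly the right shape. The genuine gap is the step you yourself flag as the ``key assertion'': you need $(B,\rho)$ itself to be totally decomposable over $F$, and this is both unproven in your write-up and stronger than what the hypotheses give. Total decomposability of $(A,\sigma)$ does not pass to the complementary factor of a $\sigma$-invariant quaternion subalgebra --- Example \ref{exm1} of the paper is built precisely on a totally decomposable $(A,\sigma)=(B,\rho)\otimes(Q',\tau)$ with $(B,\rho)$ not totally decomposable --- and the hypothesis of the lemma only gives total decomposability of $(B,\rho)_K$ over the purely inseparable quadratic extension $K=F(\sqrt\alpha)$. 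Whether that descends to $F$ on the nose is exactly the delicate point; the paper does not prove it and does not need it. (Your proposed route ``through the Pfister invariant'' is also problematic, since the Pfister invariant is only defined for algebras already known to be totally decomposable.)

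The paper circumvents the descent by invoking \cite[(3.6)]{me1}, which produces a possibly \emph{different} totally decomposable algebra with orthogonal involution $(B',\rho')$ over $F$ such that $(B',\rho')_K\simeq(B,\rho)_K$; this weak form of descent (existence of a totally decomposable $F$-form of $(B,\rho)_K$, not of $(B,\rho)$) suffices. One then takes an inseparable subalgebra $F[v_1,\dots,v_{n-1}]$ of $(B',\rho')$ and transports $K[v_1,\dots,v_{n-1}]$ into $C$ via the isomorphism $(B',\rho')_K\simeq(B,\rho)_K\simeq(C,\sigma|_C)$. A consequence you should note: the resulting generators $v'_1,\dots,v'_{n-1}$ lie in $C$ but need not lie in $B$, so $S'=F[x,v'_1,\dots,v'_{n-1}]$ is not of the tensor-product form $K\otimes_F S_B$ and your centralizer computation $C_A(S)=(K\otimes B)\cap(Q\otimes S_B)$ is not available. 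The paper instead verifies $C_A(S')=S'$ by showing $S'$ is a Frobenius subalgebra of $A$ (\cite[(3.11)]{mn1}) and applying \cite[(2.2.3)]{jacob}. Until you either establish the strong descent you assume or replace it by the weaker descent-up-to-$K$-isomorphism and adjust the self-centralizing argument accordingly, the proof is incomplete.
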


\begin{proof}
Let $\alpha=x^2\in F^\times\setminus F^{\times2}$, $K=F(x)=F(\sqrt\alpha)$, $B=C_A(Q)$ and $\rho=\sigma|_B$.
Then there exists a natural isomorphism of $K$-algebras with involution
\[f:(B,\rho)_K\simeq(C,\sigma|_C).\]
In particular, $(B,\rho)_K$ is totally decomposable.
By \cite[(3.6)]{me1}, there exists a totally decomposable algebra of degree $2^{n-1}$ with orthogonal involution $(B',\rho')$ over $F$ such that $(B,\rho)_K\simeq(B',\rho')_K$.
Let $S=F[v_1,\cdots,v_{n-1}]$ be an inseparable subalgebra of $(B',\rho')$.
The algebra $K[v_1,\cdots,v_{n-1}]\simeq S_K$ may be identified with an inseparable subalgebra of $(B,\rho)_K$.
Set $v'_i=f(v_i)\in C$, $i=1,\cdots,n-1$.
Then $S':=K[v'_1,\cdots,v'_{n-1}]$ is an inseparable subalgebra of $(C,\sigma|_C)$ satisfying $u^2\in F$ for $u\in S'$.
Since $(C,\sigma|_C)\subseteq (A,\sigma)$, we have $S'\subseteq\sym(A,\sigma)^+$.
By \cite[(3.11)]{mn1}, $S'$ is a Frobenius subalgebra of $A$, because $\dim_FS'=2^n$ and $S'$ is generated, as an $F$-algebra, by $x,v'_1,\cdots,v'_{n-1}$.
It follows from \cite[(2.2.3)]{jacob} that $C_A(S')=S'$.
Hence,
$S'$ is an inseparable subalgebra of $(A,\sigma)$ containing $x$.
\end{proof}

\begin{prop}\label{8}
Let $(A,\sigma)$ be a totally decomposable algebra of degree $8$ over $F$.
For an element $x\in\sym(A,\sigma)^+$ with $x^2\notin F^2$, the following conditions are equivalent:
\begin{itemize}
\item[(1)] There exists a $\sigma$-invariant quaternion subalgebra of $A$ containing $x$.
\item[(2)] There exists an inseparable subalgebra $S$ of $(A,\sigma)$ such that $x\in S$.
\end{itemize}
\end{prop}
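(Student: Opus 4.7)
The direction (2) $\Rightarrow$ (1) is essentially already done: if $x$ lies in an inseparable subalgebra $S$ and $x^2 \notin F^2$, then $x$ is a unit, and \cite[(6.3 (ii))]{mn1} places it inside a $\sigma$-invariant quaternion subalgebra of $A$, exactly as in the proof of Theorem \ref{cor}. So the substance is in proving (1) $\Rightarrow$ (2).

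For (1) $\Rightarrow$ (2), I would first dispatch the anisotropic case: by Theorem \ref{main}, $\sym(A,\sigma)^+$ coincides with the unique inseparable subalgebra of $(A,\sigma)$, so $x$ already lies in one. Assume now that $\sigma$ is isotropic and let $Q$ be a $\sigma$-invariant quaternion subalgebra of $A$ with $x \in Q$. Then I would write $(A,\sigma) \simeq (Q, \sigma|_Q) \otimes_F (B, \rho)$ with $B = C_A(Q)$ of degree $4$ and $\rho = \sigma|_B$, and set $\alpha = x^2 \in F^\times \setminus F^{\times 2}$, $K = F(x) = F(\sqrt{\alpha})$. Since $F(x) \subseteq Q$ and $F(x)$ is its own centralizer in $Q$, the double centralizer theorem identifies $C := C_A(x)$ with $B_K$ as central simple $K$-algebras, with $\sigma|_C$ corresponding to $\rho_K$.

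Next, applying Proposition \ref{met} yields that $\sigma|_C$ is isotropic, so $(C, \sigma|_C)$ is a degree-$4$ central simple $K$-algebra with an isotropic orthogonal involution. The key step of the argument is to observe that any such algebra with involution is totally decomposable: the isotropy of $\sigma|_C$ produces a nonzero element $e$ with $\sigma|_C(e) e = 0$, and the standard hyperbolic Witt decomposition for orthogonal involutions in characteristic two then exhibits a $\sigma|_C$-invariant subalgebra of $C$ isomorphic to $(M_2(K), t)$, whose centralizer in $C$ is a $\sigma|_C$-invariant quaternion $K$-algebra $(Q', \sigma')$, giving $(C, \sigma|_C) \simeq (M_2(K), t) \otimes_K (Q', \sigma')$. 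With the total decomposability of $(C, \sigma|_C)$ in hand and $x \in Q$ a $\sigma$-invariant quaternion subalgebra, Lemma \ref{lem} applies directly and delivers an inseparable subalgebra of $(A,\sigma)$ containing $x$.

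The main obstacle in this plan is the middle step---establishing that an isotropic orthogonal involution on a degree-$4$ central simple algebra in characteristic two forces a tensor decomposition with $(M_2(K), t)$ as a factor. Once this hyperbolic decomposition is secured, the earlier preparatory work (Proposition \ref{met} for the isotropy transfer, the double centralizer identification, and Lemma \ref{lem} for the descent to an inseparable subalgebra of $(A,\sigma)$) assembles into the proof without further difficulty.
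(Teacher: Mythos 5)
Your outline is correct up to a point: you dispatch the anisotropic case via Theorem~\ref{main}, reduce the isotropic case to showing that $(C,\sigma|_C)\simeq(B,\rho)_K$ is totally decomposable, invoke Proposition~\ref{met} to get that this degree-$4$ algebra with orthogonal involution over $K$ is isotropic, and plan to conclude via Lemma~\ref{lem}. This is the same skeleton the paper uses. The problem is the step you yourself flag as the ``main obstacle'' but then treat as secured: the assertion that \emph{every} degree-$4$ central simple algebra with an isotropic orthogonal involution in characteristic two has a tensor factor isomorphic to $(M_2(K),t)$, hence is totally decomposable. This is false. Take $\mathfrak{b}=\langle 1,1,1,b\rangle$ over a field $L$ of characteristic two with $b\in L^\times\setminus L^{\times2}$. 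This form is isotropic ($\mathfrak{b}(e_1+e_2,e_1+e_2)=0$), so $\ad(\mathfrak{b})$ is a degree-$4$ split algebra with an isotropic orthogonal involution. But if $\ad(\mathfrak{b})$ were totally decomposable, then by \cite[(4.2)]{knus} the form $\mathfrak{b}$ would be similar to a $2$-fold bilinear Pfister form $\lla a_1,a_2\rra$, and any scalar multiple of a Pfister form has determinant a square; yet $\det\mathfrak{b}=b\notin L^{\times2}$. So isotropy alone does not produce the $(M_2(K),t)$ factor, and there is no ``standard hyperbolic Witt decomposition'' that does this from mere isotropy.

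The fact that $(B,\rho)_K$ \emph{is} totally decomposable in the present situation is true, but it draws crucially on the Pfister structure of the ambient $(A,\sigma)$ --- information your argument never uses. The paper's route is to pass to a separable splitting tower $L/F$, identify $(B,\rho)_L$ with $\ad(\mathfrak{b})$ for a $4$-dimensional isotropic form $\mathfrak{b}$ over $L$, and then observe --- using $x\in Q_3$ and $\disc\sigma_3=(\alpha+\lambda^2)F^{\times2}$ --- that $\mathfrak{b}\otimes\lla\alpha+\lambda^2\rra$ is similar to a Pfister form because the right-hand side of the tensor decomposition of $(A,\sigma)_L$ is adjoint to a $3$-fold Pfister form. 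Only after that does the purely form-theoretic Lemma~\ref{pfi} apply, giving that $\mathfrak{b}_{L(\sqrt{\alpha})}$ is similar to a Pfister form. Then Lemma~\ref{4} is used to descend total decomposability back down the separable tower to $K=F(\sqrt{\alpha})$. All of this auxiliary machinery exists precisely because isotropy on its own is insufficient; you would need to supply it (or something equivalent) to close the gap.
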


\begin{proof}
If $\mathfrak{i}(A,\sigma)=0$, by Theorem \ref{cor} and Theorem \ref{main} both conditions are satisfied.
Let $\mathfrak{i}(A,\sigma)>0$.
Then $(A,\sigma)\simeq(M_2(F),t)\otimes(Q_1,\sigma_1)\otimes(Q_2,\sigma_2)$, where $(Q_i,\sigma_i)$, $i=1,2$, is a quaternion algebra with orthogonal involution over $F$.
Suppose first that $x$ is contained in a $\sigma$-invariant quaternion subalgebra $Q_3$ of $A$.
Set $B=C_A(Q_3)$, $\sigma_3=\sigma|_{Q_3}$ and $\rho=\sigma|_B$, so that
$(A,\sigma)\simeq(Q_3,\sigma_3)\otimes(B,\rho)$.
It follows that
\begin{align}\label{eq2}
(Q_3,\sigma_3)\otimes(B,\rho)\simeq(M_2(F),t)\otimes(Q_1,\sigma_1)\otimes(Q_2,\sigma_2).
\end{align}

Let $C=C_A(x)$ and $K=F(x)=F(\sqrt\alpha)$, where $\alpha=x^2\in F^\times\setminus F^{\times2}$.
Then $(C,\sigma|_C)\simeq(B,\rho)_K$ as $K$-algebras.
By Proposition \ref{met}, $(C,\sigma|_C)$ is isotropic, hence $(B,\rho)_K$ is also isotropic.
We claim that $(B,\rho)_K$ is totally decomposable.
The result then follows from Lemma \ref{lem}.

By Lemma \ref{bas}, for $i=1,2,3$ there exists a quaternion basis $(1,u_i,v_i,w_i)$ of $Q_i$ such that $u_i\in\sym(Q_i,\sigma_i)$.
Let $\beta_i=u_i^2+u_i\in F$.
For $i=0,1,2,3$, define a field $L_i$ inductively as follows:
set $L_0=F$.
For $i\geqslant1$ set $L_i=L_{i-1}(u_i)$ if $\beta_i\notin\wp(L_{i-1}):=\{y^2+y\mid y\in L_{i-1}\}$ and $L_i=L_{i-1}$ otherwise.
In other words, either $L_i=L_{i-1}$ or $L_i/L_{i-1}$ is a separable quadratic extension.
Note that we have $L_i^{\times2}\cap F^\times=F^{\times2}$, hence either $L_i(\sqrt\alpha)=L_{i-1}(\sqrt\alpha)$  or $L_i(\sqrt\alpha)/L_{i-1}(\sqrt\alpha)$ is a separable quadratic extension.
We show that $\rho_{L_3(\sqrt\alpha)}$ is totally decomposable, which implies that $\rho_{L_i(\sqrt\alpha)}$ is totally decomposable for $i=0,1,2$, thanks to Lemma \ref{4}.
In particular, $\rho_K=\rho_{F(\sqrt\alpha)}$ is also totally decomposable, as required.

Set $L=L_3$.
For $i=1,2,3$, the algebra ${Q_i}_L$ splits.
Hence we may identify $(Q_i,\sigma_i)_{L}=(M_2(L),\tau_i)$, where $\tau_i$ is an orthogonal involution on $M_2(L)$.
Using (\ref{eq2}) we obtain
\begin{align}\label{eq6}
(M_2(L),\tau_3)\otimes(B,\rho)_{L}\simeq(M_2(L),t)\otimes(M_2(L),\tau_1)\otimes(M_2(L),\tau_2).
\end{align}
Hence, $B_L$ splits and we may identify $(B,\rho)_L=\ad(\mathfrak{b})$ for some symmetric bilinear form $\mathfrak{b}$ over $L$.
As $(B,\rho)_L$ is isotropic, the form $\mathfrak{b}$ is isotropic by \cite[(4.3)]{dolphin3}.
Since $x\in \sym(Q_3,\sigma_3)^+\setminus F$, by \cite[(5.4)]{me2} there exists $\lambda\in F$ for which $0\neq x+\lambda\in\alt(Q_3,\sigma_3)$.
Hence, $\disc\sigma_3=(\alpha+\lambda^2)F^{\times2}$, which implies that
\begin{equation}\label{eq3}
(M_2(L),\tau_3)\simeq(Q_3,\sigma_3)_L\simeq\ad(\lla\alpha+\lambda^2\rra_L),
\end{equation}
by \cite[(7.4)]{knus}.
The right side of (\ref{eq6}) is the adjoint involution of a $3$-fold bilinear Pfister form over $L$.
Hence, using (\ref{eq3}) and \cite[(4.2)]{knus} it follows that $\mathfrak{b}\otimes\lla\alpha+\lambda^2\rra$ is similar to a Pfister form over $L$.
By Lemma \ref{pfi}, $\mathfrak{b}_{L(\sqrt\alpha)}$ is similar to a Pfister form.
Hence, $\rho_{L(\sqrt\alpha)}$ is totally decomposable, as required.
This proves the implication $(1)\Rightarrow(2)$.
The converse follows from \cite[(6.3 (ii))]{mn1}.
\end{proof}

\begin{lem}\label{adj}
If $\mathfrak{b}$ is a $2^n$-dimensional symmetric bilinear form over $F$, then $\ad(\mathfrak{b})\simeq(M_{2^n}(F),t)$ if and only if $\mathfrak{b}\simeq\lambda\cdot\lla1\rra^n$ for some $\lambda\in F^\times$.
\end{lem}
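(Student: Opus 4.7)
The plan is to identify the transpose involution on $M_{2^n}(F)$ as an adjoint involution in a concrete way, and then invoke the standard similarity criterion for adjoint involutions.

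First I would observe that the $n$-fold Pfister form $\lla 1\rra^n$ unfolds to the diagonal form $\la 1,1,\ldots,1\ra$ with $2^n$ entries, and that with respect to the standard basis of $F^{2^n}$, the adjoint of this form is precisely the transpose involution on $M_{2^n}(F)=\End_F(F^{2^n})$. Hence $(M_{2^n}(F),t)\simeq\ad(\lla 1\rra^n)$. This reduction lets us compare both sides inside the same framework of adjoints.

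For the ``if'' direction, assume $\mathfrak{b}\simeq\lambda\cdot\lla 1\rra^n$ for some $\lambda\in F^\times$. Since scaling a symmetric bilinear form by a nonzero scalar does not change the adjoint involution, we immediately get $\ad(\mathfrak{b})\simeq\ad(\lla 1\rra^n)\simeq(M_{2^n}(F),t)$.

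For the converse, suppose $\ad(\mathfrak{b})\simeq(M_{2^n}(F),t)\simeq\ad(\lla 1\rra^n)$. By the standard similarity criterion for adjoint involutions from \cite[(4.2)]{knus}, two non-alternating symmetric bilinear forms induce isomorphic adjoint algebras with involution if and only if they are similar. Note that $\lla 1\rra^n$ is non-alternating, so its adjoint is orthogonal; since $\ad(\mathfrak{b})$ is also orthogonal by hypothesis, $\mathfrak{b}$ itself is non-alternating and the criterion applies. Therefore $\mathfrak{b}\simeq\lambda\cdot\lla 1\rra^n$ for some $\lambda\in F^\times$, as desired.

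Neither step poses a substantive obstacle: the statement really amounts to recognizing the transpose involution as $\ad(\lla 1\rra^n)$ and quoting the similarity classification of adjoint involutions. The only minor care needed is checking that $\mathfrak{b}$ is non-alternating in the ``only if'' direction before invoking the criterion, which is automatic from the orthogonality of $t$.
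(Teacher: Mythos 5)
Your argument is correct and follows essentially the same route as the paper: both directions come down to the similarity classification of adjoint involutions from \cite[(4.2)]{knus}, once $(M_{2^n}(F),t)$ is identified with $\ad(\lla1\rra^n)$. The only (harmless) difference is that you make this identification by the direct observation that $\lla1\rra^n$ unfolds to $\langle1,\dots,1\rangle$ and that the transpose is adjoint to the standard form, whereas the paper routes it through the cited facts on Pfister invariants from \cite{mn1} and \cite{dolphin3}.
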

\begin{proof}
The result follows from \cite[(5.7)]{mn1}, \cite[(7.5 (3))]{dolphin3} and \cite[(4.2)]{knus}.
\end{proof}

\begin{lem}\label{t}
Let $(A,\sigma)$ be a central simple algebra with orthogonal involution over $F$.
If $(A,\sigma)\otimes(M_2(F),\tau)\simeq(M_{2^n}(F),t)$ for some orthogonal involution $\tau$ on $M_2(F)$, then $(A,\sigma)\simeq(M_{2^{n-1}}(F),t)$.
\end{lem}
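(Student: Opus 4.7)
The plan is to translate the isomorphism $(A,\sigma)\otimes(M_2(F),\tau)\simeq(M_{2^n}(F),t)$ into a statement about symmetric bilinear forms via the adjoint construction, and then reduce the question to Lemma \ref{adj}.

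\textbf{Setup via adjoints.} Since $A\otimes M_2(F)\simeq M_{2^n}(F)$ is split, $A$ itself is split of degree $2^{n-1}$, so $A\simeq M_{2^{n-1}}(F)$ and we may write $(A,\sigma)\simeq\ad(\mathfrak{b})$ for some non-alternating symmetric bilinear form $\mathfrak{b}$ of dimension $2^{n-1}$. Similarly $(M_2(F),\tau)\simeq\ad(\mathfrak{c})$ for some non-alternating two-dimensional form $\mathfrak{c}$; scaling preserves the adjoint, so we may take $\mathfrak{c}=\lla c\rra$ for some $c\in F^\times$. Using the adjoint-of-tensor-product identity $\ad(\mathfrak{b}\otimes\mathfrak{c})\simeq\ad(\mathfrak{b})\otimes\ad(\mathfrak{c})$ from \cite[(4.2)]{knus}, the hypothesis combined with Lemma \ref{adj} gives
\[
\mathfrak{b}\otimes\lla c\rra\simeq\lambda\cdot\lla 1\rra^n
\]
for some $\lambda\in F^\times$. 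By Lemma \ref{adj} in the reverse direction (applied in dimension $2^{n-1}$), the desired conclusion $(A,\sigma)\simeq(M_{2^{n-1}}(F),t)$ is equivalent to showing $\mathfrak{b}\simeq\mu\lla 1\rra^{n-1}$ for some $\mu\in F^\times$.

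\textbf{Main argument.} I would split cases on whether $c\in F^{\times2}$. If $c\in F^{\times2}$, then $\lla c\rra\simeq\lla 1\rra$, so $\mathfrak{b}\perp\mathfrak{b}\simeq\lambda\lla 1\rra^{n-1}\perp\lambda\lla 1\rra^{n-1}$, and Witt cancellation for non-degenerate symmetric bilinear forms in characteristic two yields $\mathfrak{b}\simeq\lambda\lla 1\rra^{n-1}$, as desired. If $c\notin F^{\times2}$, I would extend scalars to the separable quadratic extension $K=F(\sqrt c)$: over $K$ the form $\lla c\rra_K$ becomes $\lla 1\rra_K$ and the previous case gives $\mathfrak{b}_K\simeq\lambda\lla 1\rra^{n-1}_K$, i.e. $(A,\sigma)_K\simeq(M_{2^{n-1}}(K),t)$. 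One then descends to $F$ using $K^{\times2}\cap F^\times=F^{\times2}$ (as in the proof of Lemma \ref{4}) together with the rigidity of the split Pfister adjoint to recover $\mathfrak{b}\simeq\mu\lla 1\rra^{n-1}$ over $F$.

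\textbf{Main obstacle and alternative.} The descent in the case $c\notin F^{\times2}$ is the delicate point. A cleaner alternative first argues that $(A,\sigma)$ is itself totally decomposable, inherited from its tensor product with $(M_2(F),\tau)$ being so (via \cite[(5.7)]{mn1}), and then compares Pfister invariants: by multiplicativity, $\mathfrak{Pf}(A,\sigma)\otimes\lla c\rra\simeq\lla 1\rra^n$, and the divisibility and roundness of $\lla 1\rra^n$ (its value set being $F^{\times2}$) combined with standard Pfister-form manipulations in characteristic two force $\mathfrak{Pf}(A,\sigma)\simeq\lla 1\rra^{n-1}$. The characterization \cite[(7.5 (3))]{dolphin3}, which was already invoked in the proof of Lemma \ref{adj}, then yields $(A,\sigma)\simeq(M_{2^{n-1}}(F),t)$.
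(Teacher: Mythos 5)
Your primary argument has two genuine gaps, both rooted in characteristic-two phenomena. In the case $c\notin F^{\times 2}$, the extension $K=F(\sqrt c)$ is \emph{purely inseparable}, not separable as you state, and the relation $K^{\times 2}\cap F^\times=F^{\times 2}$ fails outright for this $K$: indeed $c\in K^{\times 2}\cap F^\times$ while $c\notin F^{\times 2}$. So the descent you want to borrow from the proof of Lemma \ref{4} does not apply, and the argument breaks down at that point. The case $c\in F^{\times2}$ is also shaky: Witt cancellation for nondegenerate symmetric bilinear forms fails in characteristic two (for instance $\langle a\rangle\perp\mathbb{H}\simeq\langle a,a,a\rangle$ while $\mathbb{H}\not\simeq\langle a,a\rangle$), so passing from $\mathfrak{b}\perp\mathfrak{b}\simeq\lambda\lla1\rra^{n-1}\perp\lambda\lla1\rra^{n-1}$ to $\mathfrak{b}\simeq\lambda\lla1\rra^{n-1}$ by ``halving'' needs a justification that cancellation alone does not supply. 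Your ``cleaner alternative'' also opens with an unjustified claim: total decomposability of $(A,\sigma)$ does not follow from total decomposability of $(A,\sigma)\otimes(M_2(F),\tau)$ — tensor factors of totally decomposable involutions need not be totally decomposable — and \cite[(5.7)]{mn1} does not assert this; here it is rather the conclusion of the lemma that $\mathfrak{b}$ is (similar to) a Pfister form.

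The paper's proof avoids case-splitting, cancellation, and descent altogether. From $\mathfrak{b}\otimes\lla\alpha\rra\simeq\lambda\cdot\lla1\rra^n$ it notes that $Q(\lla1\rra^n)=F^2$, and since $\mathfrak{b}$ is a subform of $\mathfrak{b}\otimes\lla\alpha\rra$ (via the $\langle1\rangle$ inside $\lla\alpha\rra$), one gets $Q(\mathfrak{b})\subseteq\lambda F^2$; a non-alternating nondegenerate form of dimension $2^{n-1}$ whose values all lie in a single square class is similar to $\lla1\rra^{n-1}$, and Lemma \ref{adj} finishes. You actually notice the crucial fact — that the value set of $\lla1\rra^n$ is $F^{\times2}$ — in your alternative, but it should be used directly to bound $Q(\mathfrak{b})$, which settles the question in one stroke and sidesteps all the issues above.
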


\begin{proof}
Observe first that $A$ splits, hence we may identify $(A,\sigma)=\ad(\mathfrak{b})$ for some symmetric bilinear form $\mathfrak{b}$ over $F$.
By \cite[(7.3 (3)) and (7.4)]{knus}, we have $(M_2(F),\tau)\simeq\ad(\lla\alpha\rra)$, where $\alpha\in F^\times$ is a representative of the class $\disc\tau\in F^\times/F^{\times2}$.
Also, $(M_{2^n}(F),t)\simeq\ad(\lla1\rra^n)$ by Lemma \ref{adj}.
The assumption implies that $\ad(\mathfrak{b}\otimes\lla\alpha\rra)\simeq\ad(\lla1\rra^n)$.
Hence, the forms $\mathfrak{b}\otimes\lla\alpha\rra$ and $\lla1\rra^n$ are similar by \cite[(4.2)]{knus}, i.e., there exists $\lambda\in F^\times$ for which $\mathfrak{b}\otimes\lla\alpha\rra\simeq\lambda\cdot\lla1\rra^n$.
As $Q(\lla1\rra^n)=F^2$, we obtain $Q(\mathfrak{b})\subseteq\lambda\cdot F^2$, i.e., $\mathfrak{b}$ is similar to $\lla1\rra^{n-1}$.
Using Lemma \ref{adj} we get $(A,\sigma)\simeq(M_{2^{n-1}}(F),t)$.
\end{proof}

\begin{thm}\label{main2}
Let $(A,\sigma)$ be a totally decomposable algebra of degree $2^n$ with orthogonal involution over $F$ and let $x\in\sym(A,\sigma)^+$ with $x^2\notin F^2$.
If $\mathfrak{i}(A,\sigma)=n-1$, then the following statements are equivalent:
\begin{itemize}
\item[(1)] There exists a $\sigma$-invariant quaternion subalgebra $Q$ of $A$ containing $x$.
\item[(2)] There exists an inseparable subalgebra $S$ of $(A,\sigma)$ such that $x\in S$.
\end{itemize}
\end{thm}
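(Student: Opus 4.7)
The plan is to mimic the structure of Proposition \ref{8} but exploit the stronger hypothesis $\mathfrak{i}(A,\sigma)=n-1$ to avoid the ad hoc Pfister-form argument used there. The direction $(2)\Rightarrow(1)$ is free: it is exactly \cite[(6.3 (ii))]{mn1}, as already invoked in Proposition \ref{8}. So all the work goes into $(1)\Rightarrow(2)$, and the scheme is to reduce everything, via Lemma \ref{lem}, to showing that the centralizer $(C,\sigma|_C)$ of $x$ is totally decomposable.

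Suppose therefore that $x$ lies in a $\sigma$-invariant quaternion subalgebra $Q$. Set $\alpha=x^2\in F^\times\setminus F^{\times2}$, $K=F(x)=F(\sqrt\alpha)$, $B=C_A(Q)$ and $\rho=\sigma|_B$, so that $(A,\sigma)\simeq(Q,\sigma|_Q)\otimes(B,\rho)$ and, as in the proof of Proposition \ref{8}, one has a natural identification $(C,\sigma|_C)\simeq(B,\rho)_K$ of $K$-algebras with involution. The target is thus to show that $(B,\rho)_K$ is totally decomposable; then Lemma \ref{lem} yields an inseparable subalgebra of $(A,\sigma)$ containing $x$.

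The key new input is Corollary \ref{ia}: because $x^2=\alpha\notin F^{\times2}$, we have $\mathfrak{i}((A,\sigma)_K)=\mathfrak{i}(A,\sigma)+1=n$, and hence (by the characterization recalled just before Lemma \ref{bas}) $(A,\sigma)_K\simeq(M_{2^n}(K),t)$. On the other hand, $K$ embeds in $Q$ as $F(x)$, so $K$ is a splitting field of the quaternion algebra $Q$; thus $(Q,\sigma|_Q)_K\simeq(M_2(K),\tau)$ for some orthogonal involution $\tau$ (orthogonality is preserved under scalar extension by the criterion $1\notin\alt$). Tensoring and comparing gives
\[
(M_2(K),\tau)\otimes(B,\rho)_K\simeq(M_{2^n}(K),t),
\]
so Lemma \ref{t} applies and yields $(B,\rho)_K\simeq(M_{2^{n-1}}(K),t)$. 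In particular $(B,\rho)_K$ is totally decomposable, Lemma \ref{lem} applies, and we conclude that $x$ lies in an inseparable subalgebra $S$ of $(A,\sigma)$.

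The main obstacle is genuinely psychological rather than technical: one has to notice that the hypothesis $\mathfrak{i}(A,\sigma)=n-1$ is exactly the condition that, after passing to $K=F(\sqrt\alpha)$, raises the hyperbolic index to its maximum value $n$, collapsing $(A,\sigma)_K$ to the split transpose form and thereby allowing Lemma \ref{t} to be invoked in place of the degree-$8$ Pfister-form machinery used in Proposition \ref{8}. Once this observation is in place the argument is a short assembly of the lemmas of \S\ref{sec-ins}--\S\ref{sec-quat}; the subtlety to keep honest is merely verifying that $K$ really does split $Q$ (which is automatic since $[K:F]=\deg Q=2$ and $K\subseteq Q$) and that the resulting involution on $M_2(K)$ is orthogonal.
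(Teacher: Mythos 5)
Your proof is correct and follows essentially the same route as the paper's: reduce to showing $(B,\rho)_K$ is totally decomposable via Lemma \ref{lem}, use Corollary \ref{ia} to see that $(A,\sigma)_K\simeq(M_{2^n}(K),t)$, observe that $Q_K$ splits, and apply Lemma \ref{t}. Your explicit remark that the induced involution on $M_2(K)$ remains orthogonal is a helpful detail that the paper takes for granted but does not change the argument.
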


\begin{proof}
The implication $(2)\Rightarrow(1)$ follows from \cite[(6.3 (ii))]{mn1}.
To prove the converse, let $C=C_A(x)$.
In view of Lemma \ref{lem}, it suffices to show that $(C,\sigma|_C)$ is totally decomposable.
Let $\tau=\sigma|_Q$, $B=C_A(Q)$ and $\rho=\sigma|_B$.
Then $(A,\sigma)\simeq(B,\rho)\otimes(Q,\tau)$.
Set $K=F(x)=F(\sqrt\alpha)$, where $\alpha=x^2\in F^\times\setminus F^{\times2}$.
We have
 $(C,\sigma|_C)\simeq_K(B,\rho)_K$.
Hence, it is enough to show that $(B,\rho)_K$ is totally decomposable.
By Corollary \ref{ia} we have $\mathfrak{i}(A,\sigma)_K=n$, so $(A,\sigma)_K\simeq(M_{2^n}(K),t)$.
It follows that $(B,\rho)_K\otimes_K(Q,\tau)_K\simeq(M_{2^n}(K),t)$.
Since $x\in Q$ and $x^2=\alpha\in K^2$, the algebra $Q_K$ splits.
Hence, using Lemma \ref{t} we get
$(B,\rho)_K\simeq_K(M_{2^{n-1}}(K),t)$.
In particular, $(B,\rho)_K$ is totally decomposable, proving the result.
\end{proof}

\section{Examples for isotropic involutions}\label{sec-exm}
In this section we show that the criteria obtained in \S\ref{sec-quat} do not necessarily apply to arbitrary involutions.
\begin{lem}\label{sym}
Let $(A,\sigma)$ be a totally decomposable algebra of degree $2^n$ with orthogonal involution over $F$.
If  $n\geqslant2$ and $(A,\sigma)\not\simeq(M_{2^n}(F),t)$, then there exist an element $w\in\sym(A,\sigma)\setminus (\alt(A,\sigma)\oplus F)$ and a unit $u\in\alt(A,\sigma)$ such that $u^2\in F^\times\setminus F^{\times2}$
and $uw=wu$.
\end{lem}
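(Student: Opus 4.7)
The plan is to exploit a tensor decomposition of $(A, \sigma)$: I would write $(A, \sigma) \simeq (Q_1, \sigma_1) \otimes (B, \rho)$ with $\disc \sigma_1$ non-trivial, take $u := v_1 \otimes 1$ for an alternating unit $v_1 \in \alt(Q_1, \sigma_1)$, and seek $w$ of the form $1 \otimes w'$ with $w' \in \sym(B, \rho)$ suitably chosen so as to lie outside $\alt(B,\rho) \oplus F$.

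First I would note that the hypothesis $(A,\sigma) \not\simeq (M_{2^n}(F), t)$ is equivalent to $\mathfrak{i}(A,\sigma) < n$, hence to $\mathfrak{Pf}(A,\sigma) \not\simeq \lla 1 \rra^n$. In the formula $\mathfrak{Pf}(A,\sigma) = \lla \alpha_1, \dots, \alpha_n \rra$ with $\alpha_i \in F^\times$ representing $\disc \sigma_i$, this forces at least one $\alpha_i \notin F^{\times 2}$. After reordering the tensor factors, I may assume $\disc \sigma_1$ is non-trivial and set $(B, \rho) := \bigotimes_{i=2}^n (Q_i, \sigma_i)$. By Lemma \ref{bas} I fix a quaternion basis $(1, u_1, v_1, w_1)$ of $Q_1$ with $u_1, v_1 \in \sym(Q_1, \sigma_1)$ and $w_1 = u_1 v_1$; from $u_1 v_1 = v_1 u_1 + v_1$ we get $\sigma_1(w_1) = v_1 u_1 = w_1 + v_1$, so $v_1 = w_1 + \sigma_1(w_1) \in \alt(Q_1, \sigma_1)$. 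The space $\alt(Q_1, \sigma_1) = F v_1$ is one-dimensional and $v_1^2 \in F^\times$ represents $\disc \sigma_1$, so $v_1^2 \in F^\times \setminus F^{\times 2}$. Consequently $u := v_1 \otimes 1 \in \alt(A,\sigma)$ satisfies $u^2 = v_1^2 \in F^\times \setminus F^{\times 2}$; in particular $u$ is a unit.

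For $w$ I would take $w := 1 \otimes w'$ with $w' \in \sym(B, \rho)$; then $w \in \sym(A, \sigma)$ and $uw = wu$ hold automatically. The condition $w \notin \alt(A,\sigma) \oplus F$ reduces, via the key identification
\[ \alt(A,\sigma) \cap (1 \otimes B) = 1 \otimes \alt(B, \rho), \]
to the requirement $w' \notin \alt(B, \rho) \oplus F$. Establishing this identification is the main technical step and the principal obstacle. The inclusion $\supseteq$ is immediate; for $\subseteq$ I would expand any $z \in A$ as $z = \sum_{j=0}^{3} e_j \otimes y_j$ with $(e_0, e_1, e_2, e_3) = (1, u_1, v_1, w_1)$ and $y_j \in B$, and use the $\sigma_1$-action $\sigma_1(e_j) = e_j$ for $j \in \{0,1,2\}$ and $\sigma_1(e_3) = e_2 + e_3$ to compute $\sigma(z) - z$ componentwise. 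Vanishing of its $e_1, e_2, e_3$-components then forces the $e_0$-component $\rho(y_0) - y_0$ to lie in $\alt(B, \rho)$, as desired. It only remains to verify $\sym(B,\rho) \setminus (\alt(B,\rho) \oplus F) \neq \emptyset$, which follows from a dimension count: for the orthogonal involution $\rho$ on $B$ of degree $m := 2^{n-1}$ we have $\dim_F \sym(B,\rho) = m(m+1)/2$ and $\dim_F(\alt(B,\rho) \oplus F) = m(m-1)/2 + 1$, differing by $m - 1 \geq 1$ since $n \geq 2$. Picking any such $w'$ and setting $w := 1 \otimes w'$ completes the construction.
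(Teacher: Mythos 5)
Your proposal is correct and follows essentially the same line as the paper: decompose $(A,\sigma)\simeq\bigotimes(Q_i,\sigma_i)$, reorder so the first factor has nontrivial discriminant, take $u$ to be an alternating unit in $Q_1$ (so $u^2\in F^\times\setminus F^{\times2}$), and take $w$ to be a symmetric element from the complementary factor lying outside $\alt\oplus F$. The only differences are that you choose $w'$ from the full complementary algebra $B$ rather than from a single quaternion factor $Q_2$, and you establish the key fact $1\otimes w'\notin\alt(A,\sigma)\oplus F$ by a direct componentwise computation in the quaternion basis $(1,u_1,v_1,w_1)$, whereas the paper invokes \cite[(3.5)]{mn} for this lift; your self-contained argument is a perfectly valid substitute.
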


\begin{proof}
Let $(A,\sigma)\simeq\bigotimes_{i=1}^n(Q_i,\sigma_i)$ be a decomposition of $(A,\sigma)$.
As $(A,\sigma)\not\simeq(M_{2^n}(F),t)$, (by re-indexing) we may assume that $(Q_1,\sigma_1)\not\simeq(M_2(F),t)$.
Let $u\in\alt(Q_1,\sigma_1)$ be a unit, so that $u^2\in F^\times$.
Note that $u^2\notin F^{\times2}$, since otherwise $Q_1$ splits and $\disc\sigma_1$ is trivial.
As $\disc t$ is also trivial (see \cite[p. 82]{knus}), we get $(Q_1,\sigma_1)\simeq(M_2(F),t)$ by \cite[(7.4)]{knus}, contradicting the assumption.
Hence, $u^2\in F^\times\setminus F^{\times2}$.
By \cite[(2.6)]{knus} we have $\dim_F\sym(Q_2,\sigma_2)=3$ and $\dim_F\alt(Q_2,\sigma_2)=1$.
Hence there exists an element $w\in\sym(Q_2,\sigma_2)\setminus (\alt(Q_2,\sigma_2)\oplus F)$.
Clearly, we have $uw=wu$.
The elements $u$ and $w$ may be identified with elements of $A$, so that $w\in\sym(A,\sigma)$ and $u\in\alt(A,\sigma)$.
Since $\alpha+w\notin\alt(Q_2,\sigma_2)$ for every $\alpha\in F$, by \cite[(3.5)]{mn} we have $\alpha+w\notin\alt(A,\sigma)$ for all $\alpha\in F$, i.e., $w\in\sym(A,\sigma)\setminus (\alt(A,\sigma)\oplus F)$, as required.
\end{proof}

The next result shows that Theorem \ref{cor} does not hold for isotropic invo\-lutions of degree $\geqslant8$ (see also Proposition \ref{deg4}).
\begin{prop}\label{count}
Let $(A,\sigma)$ be a totally decomposable algebra of degree $2^n$ with isotropic orthogonal involution over $F$.
If $n\geqslant3$ and $(A,\sigma)\not\simeq(M_{2^n}(F),t)$, then there exists an element $x\in\sym(A,\sigma)^+$ with $x^2\notin F^2$ which is not contained in any $\sigma$-invariant quaternion subalgebra of $A$.
\end{prop}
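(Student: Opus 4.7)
The plan is to exhibit an explicit symmetric square-central element $x\in A$ with $x^2\notin F^2$ that escapes $\alt(A,\sigma)\oplus F$, and then to observe that every symmetric square-central element lying in a $\sigma$-invariant quaternion subalgebra of $A$ must belong to $\alt(A,\sigma)\oplus F$, producing a contradiction.

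Setup: since $\sigma$ is isotropic, $\mathfrak{i}(A,\sigma)\geqslant 1$, so I write $(A,\sigma)\simeq(B,\rho)\otimes(M_2(F),t)$ with $(B,\rho)$ totally decomposable of degree $2^{n-1}$ with orthogonal involution. Lemma \ref{t} combined with the hypothesis $(A,\sigma)\not\simeq(M_{2^n}(F),t)$ yields $(B,\rho)\not\simeq(M_{2^{n-1}}(F),t)$, so since $n-1\geqslant 2$ Lemma \ref{sym} applied to $(B,\rho)$ provides a unit $u\in\alt(B,\rho)$ with $u^2=:\alpha\in F^\times\setminus F^{\times 2}$ and an element $w\in\sym(B,\rho)\setminus(\alt(B,\rho)\oplus F)$ satisfying $uw=wu$. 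Taking the square-zero symmetric matrix $e=\left(\begin{smallmatrix}1&1\\1&1\end{smallmatrix}\right)\in\sym(M_2(F),t)$, I set
\[x:=u\otimes 1+w\otimes e\in A.\]

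A direct computation gives $\sigma(x)=x$, and using $uw=wu$ together with $e^2=0$ the cross terms in $x^2$ vanish and I obtain $x^2=u^2\otimes 1=\alpha\in F^\times\setminus F^{\times 2}$. The crux of this half is to show $x\notin\alt(A,\sigma)\oplus F$: since $u=\rho(c)+c$ for some $c\in B$ one has $u\otimes 1\in\alt(A,\sigma)$, so a decomposition $x=z+\lambda$ with $z\in\alt(A,\sigma)$ and $\lambda\in F$ would force $w\otimes e-\lambda\in\alt(A,\sigma)$; by Remark \ref{rem} this is equivalent to $w-\lambda\in\alt(B,\rho)$, contradicting the choice of $w$.

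For the second half, suppose $x$ lies in a $\sigma$-invariant quaternion subalgebra $Q$ of $A$. Then $\sigma|_Q$ is orthogonal (otherwise $1\in\alt(A,\sigma)$), so Lemma \ref{bas} furnishes a quaternion basis $(1,u_1,v_1,w_1)$ of $Q$ with $u_1,v_1\in\sym(Q,\sigma|_Q)$; the relation $w_1=u_1v_1=v_1u_1+v_1$ combined with $\sigma|_Q(w_1)=v_1u_1$ gives $v_1\in\alt(Q,\sigma|_Q)$ and $\sym(Q,\sigma|_Q)=F\oplus Fu_1\oplus Fv_1$. Writing $x=a+bu_1+cv_1$ with $a,b,c\in F$ and expanding $x^2$ using the quaternion relations, the condition $x^2\in F$ forces $b=0$; hence $x-a=cv_1\in\alt(Q,\sigma|_Q)\subseteq\alt(A,\sigma)$, so $x\in\alt(A,\sigma)\oplus F$, contradicting the first half.

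The delicate point is reconciling two conflicting demands on $x$: escaping $\alt(A,\sigma)\oplus F$ calls for a genuinely non-alternating ingredient, while $x^2\in F$ forbids nonzero cross terms. The square-zero matrix $e$ is what allows the non-alternating piece $w\otimes e$ to be incorporated without contributing to $x^2$, and the commutation $uw=wu$ supplied by Lemma \ref{sym} kills the remaining cross term, letting the construction go through.
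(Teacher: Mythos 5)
Your proposal is correct and follows essentially the same route as the paper: decompose $(A,\sigma)=(B,\rho)\otimes(M_2(F),t)$, use Lemma \ref{sym} to get $u,w$, build a symmetric element with square $u^2\in F^\times\setminus F^{\times2}$ lying outside $\alt(A,\sigma)\oplus F$ (your $u\otimes1+w\otimes e$ is just a harmless variant of the paper's matrix $\left(\begin{smallmatrix}w&w+u\\w+u&w\end{smallmatrix}\right)$), and conclude via the fact that a non-central symmetric square-central element of a $\sigma$-invariant quaternion subalgebra must lie in $\alt(A,\sigma)\oplus F$. The only divergence is that you re-derive this last fact from a quaternion basis, whereas the paper simply cites \cite[(5.4)]{me2}; your computation is a correct self-contained substitute.
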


\begin{proof}
Since $\mathfrak{i}(A,\sigma)>0$, we may identify $(A,\sigma)=(B,\rho)\otimes(M_2(F),t)$, where $(B,\rho)$ is a totally decomposable algebra with orthogonal involution over $F$.
The assumptions $n\geqslant3$ and $(A,\sigma)\not\simeq(M_{2^n}(F),t)$ imply that $\deg_FB\geqslant4$ and
$(B,\rho)\not\simeq(M_{2^{n-1}}(F),t)$.
By Lemma \ref{sym}, there exists an element $w\in\sym(B,\rho)\setminus (\alt(B,\rho)\oplus F)$ and a unit $u\in\alt(B,\rho)$
for which $uw=wu$.
Set
\[x=\left(\begin{matrix}w & w+u \\w+u & w\end{matrix}\right)\in A.\]
By Remark \ref{rem} we have $x\in\sym(A,\sigma)\setminus (\alt(A,\sigma)\oplus F)$.
As $u^2\in F^\times\setminus F^{\times2}$ we have $x^2\in F^\times\setminus F^{\times2}$.
By \cite[(5.4)]{me2}, the element $x$ is not contained in any $\sigma$-invariant quaternion subalgebra of $A$, because $x+\alpha\notin\alt(A,\sigma)$ for every $\alpha\in F$.
\end{proof}

We conclude by showing that the implication $(1)\Rightarrow(2)$ in Proposition \ref{main2} and Proposition \ref{8} does not hold for arbitrary involutions.
We use the ideas of \cite[(9.4)]{dolphin3}.
Recall that the {\it canonical} involution $\gamma$ on a quaternion $F$-algebra $Q$ is defined as $\gamma(x)=x-\trd_Q(x)$ for $x\in Q$, where $\trd_Q(x)$ is the reduced trace of $x$ in $Q$.
Also, for a division algebra with involution $(D,\theta)$ over $F$ and $\alpha_1,\cdots,\alpha_n\in D^\times\cap\sym(D,\theta)$, the diagonal hermitian form $h$ on $D^n$ defined by $h(x,y)=\sum_{i=1}^n\theta(x_i)\alpha_iy_i$ is denoted by
$\langle\alpha_1,\cdots,\alpha_n\rangle_\theta$.
\begin{exm}\label{exm1}
Let $F\neq F^2$ and let $K=F(X,Y,Z)$, where $X$, $Y$ and $Z$ are indeterminates.
Let $Q=[X,Y)_K$ and let $\gamma$ be the canonical involution on $Q$.
By \cite[(9.3)]{dolphin3}, $Q$ is a division algebra over $K$.
Choose an element $s\in\sym(Q,\gamma)$ with $s^2=Y$.
Let $\psi$ be the diagonal hermitian form $\langle1,Z,s,s\rangle_\gamma$ over $(Q,\gamma)$ and set $(B,\rho)=\ad(\psi)$.
By \cite[(9.4)]{dolphin3}, $(B,\rho)$ is not totally decomposable, but $(B,\rho)_L$ is totally decomposable for every splitting field $L$ of $A$.

Now, choose $\alpha\in F^\times\setminus F^{\times2}$ and let $Q'=[X,\alpha)_K$ with a quaternion basis $(1,u,v,w)$.
Let $\tau$ be the involution on $Q'$ induced by $\tau(u)=u$ and $\tau(v)=v$.
Then $\tau$ is an orthogonal involution and $v=\tau(uv)-uv\in\alt(Q',\tau)$.
Set $(A,\sigma)=(B,\rho)\otimes_K(Q',\tau)$.
Then $(A,\sigma)$ is a central simple algebra with orthogonal involution over $K$.
We claim that $(A,\sigma)$ is totally decomposable.
Let $L=K(u)\subseteq Q'$ and set $C=C_{A}(1\otimes u)$.
Then $L/K$ is a separable quadratic extension and
\begin{align}\label{eq1}
(C,\sigma|_C)\simeq_L (B,\rho)_L,
\end{align}
 is a central simple $L$-algebra with orthogonal involution.
Since $u^2+u=X$, $Q_L\simeq[X,Y)_L$ splits, which implies that $B_L$ is also split.
It follows that $(B,\rho)_L$ is totally decomposable, i.e., $(C,\sigma|_C)$ is totally decomposable by (\ref{eq1}).
Using \cite[(7.3)]{me1} and the isomorphism (\ref{eq1}), one can find a totally decomposable algebra with orthogonal involution $(C',\sigma')$ over $K$ such that $(C,\sigma|_C)\simeq(C',\sigma')_L$.
As $C\subseteq A$, the algebra $C'$ may be identified with a subalgebra of $A$.
Let $Q''=C_{A}(C')$.
Then $Q''$ is a quaternion $K$-subalgebra of $A$ and $(A,\sigma)\simeq_K(C',\sigma')\otimes_K(Q'',\sigma|_{Q''})$ is totally decomposable, proving the claim.

The element $1\otimes v\in \alt(A,\sigma)^+$ is contained in the copy of $Q'$ in $A$, which is a $\sigma$-invariant quaternion subalgebra of $A$.
Note that $(C_A(1\otimes v),\sigma|_{C_A(1\otimes v)})\simeq (B,\rho)_{K(v)}$ as $K(v)$-algebras.
We show that $(B,\rho)_{K(v)}$ is not totally decomposable, which implies that $1\otimes v$ is not contained in any inseparable subalgebra of $(A,\sigma)$, thanks to \cite[(6.3)]{mn1}.
Since $v^2=\alpha\in F^{\times}\setminus F^{\times2}$, we have $K(v)\simeq F(\sqrt{\alpha})(X,Y,Z)$.
Hence, $Q_{K(v)}$ is still a division algebra by \cite[(9.3)]{dolphin3}.
Using \cite[(9.4)]{dolphin3} it follows that $(B,\rho)_{K(v)}$ is not totally decomposable.
\end{exm}

\footnotesize

\noindent{\sc A.-H. Nokhodkar, {\tt
    anokhodkar@yahoo.com},\\
Department of Pure Mathematics, Faculty of Science, University of Kashan, P.~O. Box 87317-51167, Kashan, Iran.}

\end{document}